\documentclass[10pt,a4paper]{amsart}

\usepackage{amssymb}
\usepackage{hyperref}
\usepackage{enumitem}
\setlistdepth{5}

\newlist{myEnumerate}{enumerate}{5}
\setlist[myEnumerate,1]{label=(\arabic*)}
\setlist[myEnumerate,2]{label=(\alph*)}
\setlist[myEnumerate,3]{label=(\roman*)}
\setlist[myEnumerate,4]{label=(\Alph*)}
\setlist[myEnumerate,5]{label=(\Roman*)}

\newtheorem{thm}{Theorem}[section]

\newtheorem{lem}[thm]{Lemma}

\theoremstyle{definition}

\theoremstyle{remark}

\numberwithin{equation}{section}
\numberwithin{table}{section}

\newcommand{\Q}{\mathbb{Q}}  
\newcommand{\C}{\mathbb{C}}  
\newcommand{\h}{\mathbb{H}}  
\newcommand{\Z}{\mathbb{Z}}  
\newcommand{\ord}{\mathcal{O}} 
\newcommand{\rcf}{\mathrm{RiCF}} 
\newcommand{\gal}{\mathrm{Gal}} 


\begin{document}
	
	\title{Triples of singular moduli with rational product}
	\author{Guy Fowler}
	\address{Mathematical Institute, University of Oxford, 
		Oxford, OX2 6GG, United Kingdom.}
	\email{\href{mailto:guy.fowler@maths.ox.ac.uk}{guy.fowler@maths.ox.ac.uk}}
	\urladdr{\url{https://www.maths.ox.ac.uk/people/guy.fowler}}
	\date{\today}
	\thanks{\textit{Acknowledgements:} I would like to thank Jonathan Pila and Yuri Bilu for helpful comments and advice. This work was supported by an EPSRC doctoral scholarship.}
	
	\begin{abstract}
	We show that all triples $(x_1,x_2,x_3)$ of singular moduli satisfying $x_1 x_2 x_3 \in \Q^{\times}$ are ``trivial''. That is, either $x_1, x_2, x_3 \in \Q$; some $x_i \in \Q$ and the remaining $x_j, x_k$ are distinct, of degree $2$, and conjugate over $\Q$; or $x_1, x_2, x_3$ are pairwise distinct, of degree $3$, and conjugate over $\Q$. This theorem is best possible and is the natural three dimensional analogue of a result of Bilu, Luca, and Pizarro-Madariaga in two dimensions. It establishes an explicit version of the Andr\'e--Oort conjecture for the family of subvarieties $V_{\alpha} \subset \C^3$ defined by an equation $x_1 x_2 x_3 = \alpha \in \Q$. 
	\end{abstract}
	
	\maketitle
	
\section{Introduction}\label{sec:intro}

A singular modulus is the $j$-invariant of an elliptic curve over $\C$ with complex multiplication. Singular moduli arise precisely as those numbers of the form $x = j(\tau)$, where $\tau \in \h$ is such that $[\Q(\tau): \Q]=2$. Here $\h$ denotes the complex upper half plane and $j \colon \h \to \C$ is the modular $j$-function. 

Bilu, Luca, and Pizarro-Madariaga \cite{BiluLucaMadariaga16} proved the following result on non-zero rational products of singular moduli. (Note that since $0 = j(e^{ \pi i /3})$ is a singular modulus, we must exclude the case of product $0$ in order to obtain any kind of finiteness result.)

\begin{thm}[\cite{BiluLucaMadariaga16}]\label{thm:2d}
	Suppose $x_1, x_2$ are singular moduli such that $x_1 x_2 \in \Q^{\times}$. Then either $x_1, x_2 \in \Q^{\times}$ or $x_1, x_2$ are distinct, of degree $2$, and conjugate over $\Q$.
\end{thm}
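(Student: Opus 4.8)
The plan is to combine the Galois theory of singular moduli with the standard analytic estimates for their absolute values, reducing everything to a statement about a single imaginary quadratic order together with a finite computation. I begin with the easy structural reductions. Since $0$ is excluded, both $x_1,x_2$ are nonzero, and being a product of two algebraic integers that lies in $\Q$, the value $r:=x_1x_2$ is in fact a nonzero rational \emph{integer}. From $x_2=r/x_1$ I get $\Q(x_2)\subseteq\Q(x_1)$ and, symmetrically, $\Q(x_1)\subseteq\Q(x_2)$, so $\Q(x_1)=\Q(x_2)=:K$. As the degree of a singular modulus of discriminant $\Delta$ equals the class number $h(\Delta)$, this forces $h(\Delta_1)=h(\Delta_2)=:h$, where $\Delta_i$ is the discriminant of $x_i$. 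If $h=1$ then $x_1,x_2\in\Q$ and we are in the first case, so from now on I assume $h\ge 2$.

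The second step is to reduce to the case $\Delta_1=\Delta_2$. The Galois closure of $K$ over $\Q$ is intrinsic to $K$, but computed from either generator it is the ring class field $H_{\Delta_i}$ of the corresponding order; hence $H_{\Delta_1}=H_{\Delta_2}$. Standard class field theory (the fact that $\gal(H_\Delta/\Q)$ is generalized dihedral with distinguished normal subgroup isomorphic to $\mathrm{Cl}(\Delta)$, so that $H_\Delta$ recovers $\Delta$) then shows $\Delta_1=\Delta_2$ \emph{unless} $\mathrm{Cl}(\Delta_i)$ has exponent at most $2$. The discriminants with class group of exponent $\le 2$ are the idoneal discriminants, of which there are only finitely many, and I would dispatch these by direct computation. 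So I may assume $\Delta_1=\Delta_2=\Delta$.

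Now comes the heart of the argument, for a single $\Delta$. Writing $x_1=j(\tau_{f_1})$, $x_2=j(\tau_{f_2})$ for reduced forms $f_1,f_2$, the subgroup $\gal(H_\Delta/\Q(\sqrt{\Delta}))\cong\mathrm{Cl}(\Delta)$ acts by translation on the forms, so invariance of $x_1x_2$ says that $j(f)\,j(fc)$ is independent of $f\in\mathrm{Cl}(\Delta)$, where $c:=f_1^{-1}f_2$. Evaluating this constancy at $f$ and at $fc$ and using that $j$ is injective on classes gives $j(f)=j(fc^2)$, hence $c^2=e$. If $c=e$ then $x_1=x_2$ and $x_1^2\in\Q$; this is impossible on degree grounds when $h\ge 3$, and for $h=2$ a direct check of the finitely many discriminants (the two conjugates are real and do not sum to $0$) rules it out. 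So $c$ has order exactly $2$, whence $h$ is even. The estimate $j(\tau_f)=q^{-1}+O(1)$ with $|q^{-1}|=e^{\pi\sqrt{|\Delta|}/a(f)}$, where $a(f)$ is the leading coefficient, shows that the pair containing the principal class contributes $|j(e)\,j(c)|\approx e^{\pi\sqrt{|\Delta|}(1+1/a(c))}$, strictly larger than the contribution $|j(f_0)\,j(f_0c)|\lesssim e^{\pi\sqrt{|\Delta|}}$ of any pair $\{f_0,f_0c\}$ avoiding the principal class, since there $a(f_0),a(f_0c)\ge 2$. When $h\ge 3$ (so $h\ge 4$) such a pair exists, contradicting constancy; therefore $h=2$, $c$ is the nontrivial class, and $x_1,x_2$ are the two distinct conjugate singular moduli of degree $2$, which is exactly the second case.

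The main obstacle is making the size comparison in the last step rigorous uniformly in $\Delta$. The troublesome conjugates are those attached to forms with large leading coefficient, where $|q^{-1}|=e^{\pi\sqrt{|\Delta|}/a(f)}$ is only bounded below by $e^{\pi\sqrt3}$, so the constant $744$ and the tail of the $q$-expansion can no longer be neglected and $|j(\tau_f)|$ may be genuinely small. I would therefore need effective bounds of the shape $\bigl|\,\log|j(\tau_f)|-\pi\sqrt{|\Delta|}/a(f)\,\bigr|\le C$ with an explicit $C$, strong enough to guarantee the strict inequality above once $|\Delta|$ exceeds an explicit threshold, and then clear the finitely many remaining discriminants, together with the idoneal exceptions and the $c=e$ checks, by computer. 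Keeping these estimates sharp enough that the threshold, and hence the final computation, stays feasible is the delicate part.
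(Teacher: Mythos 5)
This theorem is quoted from \cite{BiluLucaMadariaga16}; the paper gives no proof of it, so your proposal can only be judged on its own merits, and it has two genuine gaps --- both of which you half-sense, but neither of which is repairable along the lines you indicate. First, the reduction to $\Delta_1=\Delta_2$ fails. Equality of ring class fields does \emph{not} force equality of discriminants outside the exponent-$\le 2$ cases: whenever $\Delta\equiv 1\bmod 8$ the class number formula gives $h(4\Delta)=h(\Delta)$, so the orders of discriminants $\Delta$ and $4\Delta$ have the \emph{same} ring class field, and one can have $\Q(x_1)=\Q(x_2)$ with $\Delta_1=4\Delta_2$ for arbitrarily large class numbers and class groups of arbitrary exponent. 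This is precisely case (2) of Lemma~\ref{lem:samefield}, and it is a case that must be treated separately with its own estimates (compare case 1(b)(ii) of Section~\ref{sec:elim}); your translation-by-$c$ argument does not even parse there, since the two moduli lie in different class groups. Relatedly, the completeness of the list of idoneal discriminants is known only up to one possible unknown exception (Weinberger), so ``dispatch by direct computation'' is not available as stated; the effective treatment in \cite{AllombertBiluMadariaga15} has to work around exactly this.

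Second, and more seriously, the size comparison at the heart of your argument does not close. From the pair $\{e,c\}$ you only get $|j(e)j(c)|\ge 0.9994\,e^{\pi|\Delta|^{1/2}}\min\{4.4\times10^{-5},3500|\Delta|^{-3}\}$, because $|j(c)|$ can be genuinely small: singular moduli accumulate at $0$ (take forms $(a,a-1,a)$, whose roots tend to $e^{\pi i/3}$), so no bound of the form $\bigl|\log|j(\tau_f)|-\pi|\Delta|^{1/2}/a(f)\bigr|\le C$ can hold --- the estimate you say you need in your final paragraph is false, not merely delicate. Against the upper bound $(e^{\pi|\Delta|^{1/2}/2}+2079)^2$ from a pair avoiding the principal class with only $a\ge 2$, the comparison reads $3500|\Delta|^{-3}e^{\pi|\Delta|^{1/2}}\le (1+o(1))e^{\pi|\Delta|^{1/2}}$, which is no contradiction for any $|\Delta|$. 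The actual mechanism, both in \cite{BiluLucaMadariaga16} and in Section~\ref{sec:pf} of this paper, is to use a counting statement like Lemma~\ref{lem:dom} to produce, once $h$ is large enough, conjugates with $a_i'\ge m_i$ and $\sum_i 1/m_i<1$ strictly, so that the exponential saving $e^{\pi|\Delta|^{1/2}\sum 1/m_i}$ beats the polynomial loss $|\Delta|^{-3}$ and yields an explicit bound on $|\Delta|$ for each range of $h$. Your observation that $c^2=e$ (hence $h$ even) is correct and elegant, but it does not substitute for this.
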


Andr\'e \cite{Andre98} proved that an irreducible algebraic curve $V \subset \C^2$ contains only finitely many points $(x_1, x_2)$ with $x_1, x_2$ both singular moduli, unless $V$ is either a straight line or some modular curve $Y_0(N)$. Andr\'e's proof is ineffective, but effective versions of this theorem were subsequently proved by K\"uhne \cite{Kuhne12} and Bilu, Masser and Zannier \cite{BiluMasserZannier13}. Theorem~\ref{thm:2d} establishes an explicit version of Andr\'e's theorem for the class of hyperbolas given by equations $x_1 x_2 = \alpha$, where $\alpha \in \Q$.

 In this note, we prove the corresponding result on triples of singular moduli with non-zero rational product. 

\begin{thm}\label{thm:main}
	If $x_1, x_2, x_3$ are singular moduli such that $x_1 x_2 x_3 \in \Q^{\times}$, then one of the following holds:
	\begin{enumerate}
		\item $x_1, x_2, x_3 \in \Q^{\times}$;
		\item some $x_i \in \Q^{\times}$ and the remaining $x_j, x_k$ are distinct, of degree $2$, and conjugate over $\Q$;
		\item $x_1, x_2, x_3$ are pairwise distinct, of degree three, and conjugate over $\Q$.
	\end{enumerate}
\end{thm}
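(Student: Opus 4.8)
The plan is to combine elementary Galois theory, the classical size estimates for singular moduli, and a finite computation. Throughout I write $\Delta_i$ for the discriminant of $x_i$, so that $[\Q(x_i):\Q] = h(\Delta_i)$, the class number, and the Galois conjugates of $x_i$ are exactly the singular moduli of discriminant $\Delta_i$. Since $x_1 x_2 x_3 \in \Q^{\times}$, no $x_i$ can equal $0 = j(e^{\pi i /3})$, so every $\Delta_i \neq -3$ and each $x_i$ is a nonzero algebraic integer. First I would dispose of the cases where some $x_i$ is rational. If all three lie in $\Q$ we are in case (1). If exactly two, say $x_2, x_3 \in \Q^{\times}$, then $x_1 = \alpha/(x_2 x_3) \in \Q$ as well, so this cannot occur. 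If exactly one, say $x_3 \in \Q^{\times}$, then $x_1 x_2 = \alpha/x_3 \in \Q^{\times}$, and Theorem~\ref{thm:2d} forces $x_1, x_2$ to be distinct, of degree $2$, and conjugate over $\Q$, which is case (2). It therefore remains to treat the main case in which $h(\Delta_i) \geq 2$ for every $i$, and to show it forces case (3).

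For the main case the key is to bound $\max_i |\Delta_i|$. Write $\alpha = x_1 x_2 x_3 \in \Q^{\times}$ and $F = \Q(x_1,x_2,x_3)$. Because $\alpha$ is rational, $\alpha^{[F:\Q]} = N_{F/\Q}(\alpha) = \prod_i N_{F/\Q}(x_i)$, and grouping conjugates gives the clean identity $|\alpha| = \prod_i |N_{\Q(x_i)/\Q}(x_i)|^{1/h(\Delta_i)}$; that is, $|\alpha|$ is the product over $i$ of the geometric means of $\{|x| : x \text{ has discriminant } \Delta_i\}$. Using the standard estimate $\big| |x| - e^{\pi\sqrt{|\Delta_i|}/a}\big| \leq C_0$ (with $a$ the leading coefficient of the reduced form and $C_0$ absolute), together with the class number formula, each such geometric mean is only polynomially large in $|\Delta_i|$, so $|\alpha| \leq |\Delta_1|^{C_1}$ for an absolute constant $C_1$, where I order $|\Delta_1| \geq |\Delta_2| \geq |\Delta_3|$.

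On the other hand, choosing $\sigma \in \gal(\bar\Q/\Q)$ sending $x_1$ to the dominant singular modulus $\hat x_1$ of discriminant $\Delta_1$, the relation $\hat x_1 \cdot \sigma(x_2)\cdot\sigma(x_3) = \alpha$ yields $e^{\pi\sqrt{|\Delta_1|}} - C_0 \leq |\hat x_1| = |\alpha|/|\sigma(x_2)\sigma(x_3)| \leq |\alpha|/\big(\ell(\Delta_2)\ell(\Delta_3)\big)$, where $\ell(\Delta)$ denotes the least absolute value of a nonzero singular modulus of discriminant $\Delta$. Since the only singular modulus near the triple zero of $j$ at $e^{\pi i/3}$ arises from reduced forms hugging the corner, a distance estimate gives a polynomial lower bound $\ell(\Delta) \geq c|\Delta|^{-3/2}$. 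Feeding these bounds in, $e^{\pi\sqrt{|\Delta_1|}}$ is at most polynomial in $|\Delta_1|$, which bounds $\max_i |\Delta_i|$ by an explicit absolute constant.

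Finally, with the discriminants bounded I would conclude by a finite computation: for every discriminant $\Delta$ of class number $\geq 2$ below the bound, run over all triples (with repetition, and possibly of different discriminants) of singular moduli and test which products lie in $\Q^{\times}$. One checks that the only such triples are the three pairwise distinct conjugate singular moduli of a discriminant with $h(\Delta) = 3$, whose product is, up to sign, the constant term of the Hilbert class polynomial and hence rational, giving exactly case (3). The hard part will be the analytic input of the third paragraph: establishing $\ell(\Delta) \gg |\Delta|^{-3/2}$ and pushing the constants in the geometric-mean estimate far enough that the resulting bound on $|\Delta_1|$ is small enough to make the final search feasible. The combinatorial bookkeeping of which conjugate-products can be rational — ruling out mixed discriminants and repeated factors within the bound — is then routine but lengthy.
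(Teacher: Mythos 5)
Your first paragraph (disposing of the cases where some $x_i$ is rational via Theorem~\ref{thm:2d}) matches the paper, and your overall architecture --- bound the discriminants, then finish by a finite computation --- is the right shape. But the pivotal claim of your second paragraph is false, and it fails at exactly the point where the genuine difficulty of the problem lives. You assert that the geometric mean $\lvert N_{\Q(x_i)/\Q}(x_i)\rvert^{1/h(\Delta_i)}$ of the absolute values of the conjugates of $x_i$ is polynomially bounded in $\lvert\Delta_i\rvert$, whence $\lvert\alpha\rvert\leq\lvert\Delta_1\rvert^{C_1}$. The dominant conjugate alone contributes a factor $e^{\pi\lvert\Delta_i\rvert^{1/2}/h(\Delta_i)}$ to that geometric mean, and the remaining conjugates are bounded below only by $c\lvert\Delta_i\rvert^{-3}$; so your claim is essentially \emph{equivalent} to an effective lower bound $h(\Delta)\gg\lvert\Delta\rvert^{1/2}/\log\lvert\Delta\rvert$. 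No such bound is known: effectively one has only $h(\Delta)\gg(\log\lvert\Delta\rvert)^{1-\epsilon}$, and even Siegel's ineffective $h\gg_\epsilon\lvert\Delta\rvert^{1/2-\epsilon}$ is too weak (it yields $e^{O(\lvert\Delta\rvert^{\epsilon})}$, superpolynomial, and being ineffective it could not in any case produce the explicit classification the theorem asserts). The class number formula does not rescue this: the factor $L(1,\chi)$ hidden in $h$ cancels only against the \emph{average} behaviour of $\sum_a 1/a$ over the class, not against the contribution of the few reduced forms with small $a$. Indeed the triples of case (3) themselves already violate your inequality asymptotically: there $\lvert\alpha\rvert=\lvert H_{\Delta}(0)\rvert\geq 0.9994\,e^{\pi\lvert\Delta\rvert^{1/2}}\cdot\bigl(\min\{4.4\times10^{-5},3500\lvert\Delta\rvert^{-3}\}\bigr)^{2}$, which is exponential in $\lvert\Delta\rvert^{1/2}$, not polynomial in $\lvert\Delta\rvert$. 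Without the bound $\lvert\alpha\rvert\leq\lvert\Delta_1\rvert^{C_1}$, your third paragraph produces nothing and the discriminants are never bounded.

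This is precisely why the paper does not argue through norms. It first exploits $\Q(x_1)=\Q(x_2x_3)$ together with Lemma~\ref{lem:field} to force $h_2,h_3\mid 2h_1$ and its permutations, leaving only the patterns $h_1=h_2=h_3$, $h_1=h_2=2h_3$, $h_1=2h_2=2h_3$; it then pins down the possible ratios $\Delta_i/\Delta_j$ using Lemmas~\ref{lem:samefield}, \ref{lem:subfieldsamefund} and \ref{lem:subfielddiffund}; and only then does it play the dominant-conjugate lower bound against upper bounds coming from \emph{several different conjugates} of the whole triple (with denominators $a\geq 3,4,5$, guaranteed to exist once $h$ is large by Lemma~\ref{lem:dom}). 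This bounds $\lvert\Delta\rvert$ \emph{in terms of $h$}, and the finitely many discriminants of each small class number are then known explicitly --- no lower bound for $h(\Delta)$ in terms of $\lvert\Delta\rvert$ is ever needed. If you want to keep your outline, you must replace the norm identity by this kind of conjugate-by-conjugate comparison; as written, your second paragraph silently assumes a class-number lower bound of GRH strength.
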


Conversely, if one of (1)--(3) holds, then it is clear that $x_1 x_2 x_3 \in \Q^{\times}$. Further, each of these cases is achieved. Theorem~\ref{thm:main} is therefore best possible. 

The aforementioned theorem of Andr\'e is an instance of the much more general Andr\'e--Oort conjecture on the special subvarieties of Shimura varieties. For subvarieties of $\C^n$, this conjecture was proved by Pila \cite{Pila11} (see therein for background on the conjecture also). Pila's proof, which uses o-minimality, is ineffective. Indeed, when $n > 2$ the conjecture for $\C^n$ is known effectively only for some very restricted classes of subvarieties, see \cite{BiluKuhne18} and \cite{Binyamini19}.

Theorem~\ref{thm:main} allows us to establish a completely explicit version of the Andr\'e--Oort conjecture for the following family of subvarieties of $\C^3$. Let $V_{\alpha}$ be the subvariety of $\C^3$ defined by the condition $x_1 x_2 x_3 = \alpha \in \Q$. The multiplicative independence modulo constants of pairwise distinct $\mathrm{GL}_2^+(\Q)$-translates of the $j$-function, proved by Pila and Tsimerman \cite[Theorem~1.3]{PilaTsimerman17}, shows that the only special subvarieties of $V_{\alpha}$ are given by imposing conditions of the form either $x_i = x_j$, or $x_k = \sigma$ for $\sigma$ a singular modulus. Our Theorem~\ref{thm:main} explicitly determines for each possible $\alpha$ which $\sigma$ occur in the definitions of the special subvarieties of $V_{\alpha}$. One thus obtains a fully explicit Andr\'e--Oort statement in this setting.

There are 13 rational singular moduli, one of which is $0$; 29 pairs of conjugate singular moduli of degree 2; and 25 triples of conjugate singular moduli of degree 3. The list of these may be computed in PARI. There are thus $364$ unordered triples $(x_1, x_2, x_3)$ as in (1); $348$ unordered triples $(x_1, x_2, x_3)$ as in (2); and $25$ unordered triples $(x_1, x_2, x_3)$ as in (3). One may straightforwardly compute the corresponding products $x_1 x_2 x_3$. There are 13 rational numbers which are the product of two distinct triples in (1) and 16 rational numbers which are the products of triples in (1) and (2). No other rational number is the product of more than one such triple. There are thus $708$ distinct non-zero rational numbers which arise as the product of three singular moduli, and the list of both these rational numbers and the corresponding triples of singular moduli is known. Since singular moduli are algebraic integers, we note that if $x_1 x_2 x_3 \in \Q$, then in fact $x_1 x_2 x_3 \in \Z$ and so these $708$ distinct rational numbers are all rational integers.

The plan of this note is as follows. Section~\ref{sec:background} contains the facts about singular moduli that we need for the proof of Theorem~\ref{thm:main}. The proof of Theorem~\ref{thm:main} is split over Sections~\ref{sec:pf} and \ref{sec:elim}. In Section~\ref{sec:pf}, we reduce to an effective finite list the possible triples $(x_1,x_2,x_3)$ of singular moduli with non-zero rational product which do not belong to one of the trivial cases (1)--(3) of Theorem~\ref{thm:main}. Then in Section~\ref{sec:elim} we explain how to use a PARI script \cite{PARI2} to eliminate all the triples on this list. The PARI scripts used in this article are available from \url{https://github.com/guyfowler/rationaltriples}.

\section{Background on singular moduli}\label{sec:background}

\subsection{Singular moduli and complex multiplication}\label{subsec:singmod}
We collect here those results about singular moduli which we will use in the sequel. For background on singular moduli and the theory of complex multiplication, see for example \cite{Cox89}.

Let $x$ be a singular modulus, so that $x = j(\tau)$ where $\tau \in \h$ is quadratic. Then $K = \Q(\tau)$ is an imaginary quadratic field, and one may write $K = \Q(\sqrt{d})$ for some square-free integer $d<0$. The singular modulus $x$ is the $j$-invariant of the CM elliptic curve $E_\tau = \C / \langle 1, \tau \rangle$, which has endomorphism ring $\ord = \mathrm{End}(E_\tau) \supsetneq \Z$. Here $\ord$ is an order in the imaginary quadratic field $K$.

One associates to $x$ its discriminant $\Delta$, which is the discriminant of the order $\ord = \mathrm{End}(E_\tau)$. One has that $\Delta = f^2 D$, where $D$ is the discriminant of the number field $K$ (the fundamental discriminant) and $f = [\ord_{K} : \ord]$ is the conductor of the order $\ord$ (here $\ord_{K}$ is the ring of integers of $K$). One has that $K = \Q(\sqrt{\Delta})$. Further, $\Delta = b^2-4ac$, where $a, b, c \in \Z$ are such that $a\tau^2 + b \tau + c =0$ and $\gcd(a,b,c)=1$.

The singular moduli of a given discriminant $\Delta$ form a full Galois orbit over $\Q$, and one has that $[\Q(x) : \Q] = h(\Delta)$, where $h(\Delta)$ is the class number of the (unique) imaginary quadratic order of discriminant $\Delta$. The Galois group of $\Q(x)$ acts sharply transitively on the singular moduli of discriminant $\Delta$.

For a discriminant $\Delta$, we define $H_{\Delta}$, the Hilbert class polynomial of discriminant $\Delta$, by
\[H_{\Delta}(x) = \prod_{i=1}^n (x - x_i),\]
where $x_1, \ldots, x_n$ are the singular moduli of discriminant $\Delta$. The polynomial $H_{\Delta}$ has integer coefficients and is irreducible over the field $K = \Q(\sqrt{\Delta})$. The splitting field of $H_{\Delta}$ over $K$ is equal to $K(x_i)$ for $i=1,\ldots,n$. The field $K(x_i)$ is the ring class field of the imaginary quadratic order $\mathcal{O}$ of discriminant $\Delta$. Thus $K(x_i) / K$ is an abelian extension with $\gal(K(x_i)/K) \cong \mathrm{cl}(\mathcal{O})$, the ideal class group of the order $\mathcal{O}$. Hence, $[K(x_i) : K] = h(\Delta)$ and the singular moduli of discriminant $\Delta$ form a full Galois orbit over $K$ also.

The singular moduli of a given discriminant $\Delta$ may be explicitly described in the following way \cite[Proposition~2.5]{BiluLucaMadariaga16}. Write $T_\Delta$ for the set of triples $(a,b,c) \in \Z^3$ such that: $\gcd(a,b,c)=1$, $\Delta = b^2-4ac$, and either $-a < b \leq a < c$ or $0 \leq b \leq a = c$. Then there is a bijection between $T_\Delta$ and the singular moduli of discriminant $\Delta$, given by $(a,b,c) \mapsto j((b + \sqrt{\Delta})/2a)$.  For a singular modulus $x$ of discriminant $\Delta$, one thus has that $\lvert T_{\Delta} \rvert = [\Q(x) : \Q] = [K(x) : K]= h(\Delta)$, where $K= \Q(\sqrt{\Delta})$.

\subsection{Bounds on singular moduli}\label{subsec:bounds}

We now state some upper and lower bounds for singular moduli. These bounds will be used without special reference in Section~\ref{sec:pf}. For every non-zero singular modulus $x$ of discriminant $\Delta$, we have (\cite[(12)]{BiluLucaMadariaga16}) the lower bound 
\[ \lvert x \rvert \geq \min\{4.4 \times 10^{-5}, 3500 \lvert \Delta \rvert^{-3}\}.\]

The $j$-function has a Fourier expansion $j(z) = \sum_{n=-1}^{\infty} c_n q^n$ in terms of the nome $q=e^{2 \pi i z}$ with $c_n \in \Z_{>0}$ for all $n$. Consequently, for a singular modulus $x$ corresponding to a triple $(a, b, c) \in T_{\Delta}$ we have (\cite[\S2]{FayeRiffaut18}):
\[e^{\pi \lvert \Delta \rvert^{1/2}/a}-2079 \leq \lvert x \rvert \leq e^{\pi \lvert \Delta \rvert^{1/2}/a}+2079.\]
We will apply this bound variously with $a=2,3,4,5$, making use of Lemma~\ref{lem:dom} below. For a singular modulus $x$ corresponding to a triple $(a,b,c) \in T_{\Delta}$ with $a=1$ and $\lvert \Delta \rvert \geq 23$, one obtains also that $ \lvert x \rvert \geq 0.9994 e^{\pi \lvert \Delta \rvert^{1/2}}$, see \cite[(11)]{BiluLucaMadariaga16}. 

\begin{lem}\label{lem:dom}
	For a given discriminant $\Delta$, there exists:
	\begin{enumerate}
		\item a unique singular modulus, corresponding to a triple with $a=1$;
		\item at most two singular moduli, corresponding to triples with $a=2$, and if $\Delta \equiv 4 \bmod 16$, then there are no such singular moduli;
		\item at most two singular moduli corresponding to triples with $a=3$;
		\item at most two singular moduli corresponding to triples with $a=4$;
		\item at most two singular moduli corresponding to triples with $a=5$.
\end{enumerate}
\end{lem}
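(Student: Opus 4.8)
I would prove Lemma~\ref{lem:dom} by directly analysing the set $T_\Delta$ of reduced triples $(a,b,c)$, counting how many can have each fixed small value of $a$. Recall that each triple satisfies $\gcd(a,b,c)=1$, $\Delta = b^2 - 4ac$, and the reduction inequalities $-a < b \le a < c$ or $0 \le b \le a = c$. The key observation throughout is that, once $a$ is fixed, the value of $b$ is heavily constrained: from $\Delta = b^2 - 4ac$ we get $b^2 \equiv \Delta \pmod{4a}$, and the reduction condition forces $b$ to lie in the narrow range $-a < b \le a$. So the whole argument reduces to counting solutions $b$ to the congruence $b^2 \equiv \Delta \pmod{4a}$ within an interval of length essentially $2a$, together with keeping track of when the resulting $c = (b^2-\Delta)/(4a)$ is an integer satisfying $a \le c$ (or $a=c$) and when $\gcd(a,b,c)=1$.

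\begin{proof}[Proof sketch]
For \textbf{(1)}, when $a=1$ the reduction inequalities force $b \in \{0,1\}$ and the parity of $b$ is determined by $\Delta \bmod 4$; exactly one choice makes $c=(b^2-\Delta)/4$ an integer, and then $\gcd(1,b,c)=1$ automatically, giving a unique triple. For \textbf{(2)}--\textbf{(5)} with $a \in \{2,3,4,5\}$, the range $-a < b \le a$ contains $2a$ integers, but the constraint $b^2 \equiv \Delta \pmod{4a}$ together with the required parity cuts this down sharply. The plan is to solve $b^2 \equiv \Delta \pmod{4a}$ case by case in the residue $b \bmod 2a$: for each fixed $a$ the congruence $b^2 \equiv \Delta \pmod{4a}$ has at most two solutions $b$ modulo $2a$ in the symmetric interval $(-a, a]$ once one also imposes $\gcd(a,b,c)=1$, and this bounds the number of admissible triples by two. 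The special claim in (2) that $\Delta \equiv 4 \pmod{16}$ yields no triple with $a=2$ follows because then $\Delta \equiv 4 \pmod 8$, while $b^2 - 8c \equiv b^2 \pmod 8$ ranges only over $\{0,1,4\} \pmod 8$ with $b^2 \equiv 4$ forcing $b$ even, whence $4 \mid \gcd(a,b)$ contradicts $\gcd(a,b,c)=1$; chasing this congruence carefully shows no reduced triple exists.
\end{proof}

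**Key steps in order.** First I would fix $a$ and record that $b$ runs over the $2a$ integers in $(-a,a]$ (handling the boundary case $a=c$ separately). Second, I would impose $b \equiv \Delta \pmod 2$ (needed for $c$ to be an integer) and then solve the stronger congruence $b^2 \equiv \Delta \pmod{4a}$, which for each of $a=2,3,4,5$ is a finite check over the few admissible residues. Third, for each surviving $b$ I would verify $c=(b^2-\Delta)/(4a)$ and discard solutions violating $a \le c$ or $\gcd(a,b,c)=1$. Fourth, I would count the survivors and confirm the bound of two in each case, treating the $\Delta \equiv 4 \bmod 16$ subcase of (2) by the parity/divisibility argument sketched above.

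**Main obstacle.** The genuinely delicate point is the coprimality condition $\gcd(a,b,c)=1$ interacting with the congruence count: naively, $b^2 \equiv \Delta \pmod{4a}$ can have up to four solutions modulo $4a$ when $a$ has several prime factors (as for $a=4$), so the bound of two is not immediate and requires showing that the extra solutions either fall outside $(-a,a]$, fail $a \le c$, or are killed by $\gcd(a,b,c)=1$. I expect the $a=4$ case to demand the most careful bookkeeping, since $4a=16$ admits the richest set of square residues; the other values of $a$ should fall out more quickly once the congruential framework is in place.
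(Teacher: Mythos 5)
Your treatment of parts (3)--(5) is in substance the paper's own argument: fix $a$, observe that every triple in $T_\Delta$ with that $a$ satisfies $b^2\equiv\Delta\pmod{4a}$ with $b$ confined to the $2a$ integers in $(-a,a]$, and check that at most two such $b$ can share a square residue mod $4a$ (the pairs $\pm b$, plus the exceptional pair $\{0,4\}$ when $a=4$). One simplification you are missing: the interval constraint alone already gives the bound of two --- a direct check of the squares of $-a<b\le a$ modulo $4a$ for $a=3,4,5$ shows each residue class is hit by at most two values of $b$ --- so the ``delicate'' interaction with $\gcd(a,b,c)=1$ and $a\le c$ that you flag as the main obstacle never arises, and the $a=4$ case needs no extra bookkeeping. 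Note also that the paper does not reprove (1) and (2) at all; it cites them from Proposition~2.6 of Bilu--Luca--Pizarro-Madariaga, whereas you attempt them directly.

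That attempt at (2) contains a step that fails. You weaken the hypothesis $\Delta\equiv 4\pmod{16}$ to $\Delta\equiv 4\pmod 8$ and argue from there, but the mod $8$ statement is not strong enough: for $\Delta=-20\equiv 4\pmod 8$ (and $\equiv 12\pmod{16}$) the reduced triple $(2,2,3)$ exists, so no argument using only $\Delta\bmod 8$ can rule out $a=2$. Moreover the assertion ``$4\mid\gcd(a,b)$'' is impossible when $a=2$; from $b$ even you only get $2\mid\gcd(a,b)$, and to contradict $\gcd(a,b,c)=1$ you still need $2\mid c$. That last divisibility is exactly what the full congruence supplies: with $a=2$ and $b\in\{0,2\}$ forced even, $b^2-8c\equiv 4\pmod{16}$ gives $8c\equiv b^2-4\pmod{16}$, which is insoluble for $b=0$ and forces $c$ even for $b=2$, whence $\gcd(2,b,c)\ge 2$. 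Rerun your congruence chase at the modulus $16$ rather than $8$ and part (2) goes through; as written, the sketch does not.
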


\begin{proof}
	The first two claims are Proposition~2.6 of \cite{BiluLucaMadariaga16}. We show the remaining claims.
	
	Suppose $a=3$. Let $(3, b_1, c_1), (3, b_2, c_2)$ be two such tuples. Then $b_1, b_2 \in \{ -2, -1, 0, 1, 2, 3\}$. Since $\Delta = b^2 - 4ac$ for all $(a,b,c) \in T_{\Delta}$, one has that $b_1^2 -12c_1 = b_2^2 -12c_2$. Thus $b_1^2 - b_2^2 \equiv 0 \bmod 12$. Therefore, it must be that $b_1 = \pm b_2$. Since $a_i, b_i$ together uniquely determine $c_i$, there are at most two tuples in $T_{\Delta}$ with $a=3$.
	
	Now let $a=4$. Suppose $(4, b_1, c_1), (4, b_2, c_2)$ are two such tuples. Then $b_1, b_2 \in \{ -3, -2, -1, 0, 1, 2, 3, 4\}$. Since $\Delta = b^2 - 4ac$ for all $(a,b,c) \in T_{\Delta}$, one has that $b_1^2 -16c_1 = b_2^2 -16c_2$. Thus $b_1^2 - b_2^2 \equiv 0 \bmod 16$. Therefore, it must be that either $b_1 = \pm b_2$ or $\{b_1, b_2\}=\{0,4\}$. Since $a_i, b_i$ together uniquely determine $c_i$, there are at most two tuples in $T_{\Delta}$ with $a=4$.
	
	Let $a=5$. Suppose $(5, b_1, c_1), (5, b_2, c_2)$ are two such tuples. Then $b_1, b_2 \in \{ -4, -3, -2, -1, 0, 1, 2, 3, 4, 5\}$. Since $\Delta = b^2 - 4ac$ for all $(a,b,c) \in T_{\Delta}$, one has that $b_1^2 -20c_1 = b_2^2 -20c_2$. Thus $b_1^2 - b_2^2 \equiv 0 \bmod 20$. Therefore, it must be that $b_1 = \pm b_2$. Since $a_i, b_i$ together uniquely determine $c_i$, there are at most two tuples in $T_{\Delta}$ with $a=5$.
\end{proof}

\subsection{Fields generated by singular moduli}\label{subsec:fields}

Our proof of Theorem~\ref{thm:main} will also rely on some results about the fields generated by singular moduli. The first of these is a result on when two singular moduli generate the same field. It was proved mostly in \cite{AllombertBiluMadariaga15}, as Corollary~4.2 and Proposition~4.3. For the ``further'' claim in (2), see \cite[\S3.2.2]{BiluLucaMadariaga16}.

\begin{lem}\label{lem:samefield}
	Let $x_1, x_2$ be singular moduli with discriminants $\Delta_1, \Delta_2$ respectively. Suppose that $\Q(x_1) = \Q(x_2)$, and denote this field $L$. Then $h(\Delta_1) = h(\Delta_2)$, and we have that:
	\begin{enumerate}
		\item If $\Q(\sqrt{\Delta_1}) \neq \Q(\sqrt{\Delta_2})$, then the possible fields $L$ are listed in \cite[Table~4.1]{AllombertBiluMadariaga15}. Further, the field $L$ is Galois and the discriminant of any singular modulus $x$ with $\Q(x) = L$ is also listed in this table.
		\item If $\Q(\sqrt{\Delta_1}) = \Q(\sqrt{\Delta_2})$, then either: $L = \Q$ and $\Delta_1, \Delta_2 \in \{-3, -12, -27\}$; or: $\Delta_1 / \Delta_2 \in \{1, 4, 1/4\}$. Further, if $\Delta_1 = 4 \Delta_2$, then $\Delta_2 \equiv 1 \bmod 8$.
	\end{enumerate}
\end{lem}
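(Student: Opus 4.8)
The plan is to pass from the fields $\Q(x_i)$ to the associated ring class fields and exploit their known Galois structure. Write $\Delta_i = f_i^2 D_i$ with $D_i$ the fundamental discriminant of $K_i := \Q(\sqrt{\Delta_i})$, and set $L = \Q(x_1) = \Q(x_2)$. Since $[\Q(x_i):\Q] = h(\Delta_i)$, the hypothesis $\Q(x_1)=\Q(x_2)$ gives $h(\Delta_1) = h(\Delta_2)$ immediately. The key object is the ring class field $K_i(x_i) = K_i \cdot L$, which is abelian over $K_i$ with $\gal(K_i(x_i)/K_i) \cong \mathrm{cl}(\ord_i)$ and, classically, is Galois over $\Q$ with $\gal(K_i(x_i)/\Q)$ a generalized dihedral group $\mathrm{cl}(\ord_i) \rtimes \langle c \rangle$, where $c$ is complex conjugation acting by inversion. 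I split on whether $K_1 = K_2$.

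For case (2) we have $K_1 = K_2 = K$ and $D_1 = D_2 = D$, so $K(x_1) = K L = K(x_2)$ and the ring class fields of conductors $f_1$ and $f_2$ coincide. Using that ring class fields are ordered by divisibility of the conductor, with compositum and intersection given by lcm and gcd, the equality $H_{f_1} = H_{f_2}$ forces $H_{\gcd(f_1,f_2)} = H_{\mathrm{lcm}(f_1,f_2)}$, hence $h(g^2 D) = h(l^2 D)$ for $g = \gcd(f_1,f_2)$, $l = \mathrm{lcm}(f_1,f_2)$, $g \mid l$. I would now feed this into the ring class number formula $h(f^2 D) = \frac{f\, h(D)}{[\ord_K^\times : \ord^\times]} \prod_{p \mid f}\bigl(1 - (\tfrac{D}{p})\tfrac1p\bigr)$. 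Passing from $g$ to a prime multiple multiplies the class number by $p - (\tfrac{D}{p})$ for a new odd prime (always $\ge 2$) or by $p$ for a repeated prime, while a new factor of $2$ contributes $2 - (\tfrac{D}2)$, which equals $1$ exactly when $D \equiv 1 \bmod 8$. Hence equality of class numbers forces $l = 2g$ with $g$ odd and $D \equiv 1 \bmod 8$, giving $f_1/f_2 \in \{1,2,1/2\}$, i.e. $\Delta_1/\Delta_2 \in \{1,4,1/4\}$, and $\Delta_2 = g^2 D \equiv 1 \bmod 8$ when $\Delta_1 = 4\Delta_2$. The unit index $[\ord_K^\times:\ord^\times]$ is nontrivial only for $D \in \{-3,-4\}$; checking these small cases by hand produces the exceptional coincidences with $h = 1$ and $L = \Q$, namely $\Delta \in \{-3,-12,-27\}$.

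For case (1), $K_1 \neq K_2$, so $K_1 K_2$ is biquadratic. Set $F := K_1(x_1)$. Since $x_2 \in L \subseteq F$ and $F/\Q$ is Galois, the whole Galois orbit of $x_2$, i.e. the root set of its minimal polynomial $H_{\Delta_2}$, lies in $F$; thus the splitting field $E_2$ of $H_{\Delta_2}$ over $\Q$ satisfies $E_2 \subseteq F$, and as $[F:L]=2$ one has $E_2 \in \{L, F\}$. If $E_2 = L$ then $L$ is already Galois over $\Q$; if $E_2 = F$ then comparing degrees gives $F = K_2 L$, so $K_2 \subseteq F$ and hence $K_1 K_2 \subseteq F$. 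In the latter case $K_1 K_2$ is a multiquadratic subfield of $F$ and so lies in the maximal subextension of $F/\Q$ abelian over $\Q$; since $\gal(F/\Q)^{\mathrm{ab}} \cong (\mathrm{cl}(\ord_1)/2\,\mathrm{cl}(\ord_1)) \times \langle c\rangle$, this subextension is the genus field of $\ord_1$, whose quadratic subfields correspond to factorisations of $\Delta_1$ into coprime fundamental discriminants. Thus $\Delta_2$ is expressed through the genus characters of $\Delta_1$, and symmetrically $\Delta_1$ through those of $\Delta_2$; together with $h(\Delta_1) = h(\Delta_2)$ this severely restricts $\mathrm{cl}(\ord_i)$, forcing it to be $2$-elementary in the surviving cases, whence $\gal(F/\Q)$ is abelian, $c$ is central, and $L = F^{\langle c\rangle}$ is Galois.

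The main obstacle is the finiteness step in case (1): converting these genus-theoretic constraints into the explicit finite list of Table~4.1. This requires coupling the structural restriction — everything forced into the $2$-elementary genus field — with effective bounds guaranteeing only finitely many imaginary quadratic discriminants for each (necessarily small) class number, and then enumerating them. By contrast, case (2) is essentially mechanical once the class number formula is in hand, and the verification of the $D \in \{-3,-4\}$ anomalies is a short finite check.
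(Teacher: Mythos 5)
First, a point of comparison: the paper does not actually prove this lemma --- it is imported from \cite{AllombertBiluMadariaga15} (Corollary~4.2 and Proposition~4.3), with the final sentence of part~(2) taken from \cite[\S 3.2.2]{BiluLucaMadariaga16} --- so you are reconstructing an argument the paper only cites. Your case~(2) is essentially the cited argument and is sound: pass to the ring class fields via $K(x_i)=K\cdot L$, use the compositum/intersection formulas and the class number formula, and observe that the only conductor step leaving the class number unchanged is a new prime $2$ with $\bigl(\tfrac{D}{2}\bigr)=1$, which yields $\Delta_1/\Delta_2\in\{1,4,1/4\}$ and $\Delta_2\equiv 1\bmod 8$ in the ratio-$4$ case. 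This works provided you really carry out the $D\in\{-3,-4\}$ bookkeeping where the compositum formula acquires an extra factor; the paper's own proof of Lemma~\ref{lem:subfieldsamefund} shows exactly what that looks like.

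Case~(1), however, has a genuine gap at its central step. Having placed $K_1K_2$ inside the genus field of $\ord_1$, you assert that this, together with $h(\Delta_1)=h(\Delta_2)$, ``forces'' the class groups to be $2$-elementary. It does not: the genus field of \emph{any} order contains one quadratic subfield per prime discriminant dividing $\Delta_1$, irrespective of whether $\mathrm{cl}(\ord_1)$ has odd part or $\Z/4\Z$-factors, so the containment $K_2\subseteq(\text{genus field of }\ord_1)$ puts no constraint on the structure of the class group beyond what is always true. The mechanism that actually closes this case --- used in \cite[Theorem~4.1]{AllombertBiluMadariaga15} and replayed in this paper's proof of Lemma~\ref{lem:subfielddiffund} --- is that complex conjugation acts by inversion on \emph{both} $H_1=\gal(M/K_1)$ and $H_2=\gal(M/K_2)$ simultaneously: choosing $c\in H_2\setminus H$ with $H=H_1\cap H_2$ gives $h^{-1}=chc^{-1}=h$ for all $h\in H$, so $H\cong(\Z/2\Z)^n$, and the remaining group theory either makes $H_1,H_2$ elementary (so $L$ is Galois) or produces $G\cong D_8\times(\Z/2\Z)^{n-1}$, whose unique subgroup isomorphic to $(\Z/4\Z)\times(\Z/2\Z)^{n-1}$ forces $H_1=H_2$ and hence $K_1=K_2$, a contradiction. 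Your sketch never invokes this dihedral action, which is the engine of the proof. Separately, the ``finiteness step'' you defer is not a routine effective class-number estimate: enumerating all discriminants with $2$-elementary class group is the one-class-per-genus problem, settled in \cite[Table~2.1]{AllombertBiluMadariaga15} only up to a possible single exceptional discriminant with $h\geq 128$, and one must then argue that such an exception cannot furnish a pair with $K_1\neq K_2$ and equal class numbers. Without these two inputs part~(1) does not close.
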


We now establish a similar result on when one singular modulus generates a degree $2$ subfield of the field generated by another singular modulus. We split the proof into the next two lemmas.

\begin{lem}\label{lem:subfieldsamefund}
	Let $x_1, x_2$ be singular moduli such that $[\Q(x_1) : \Q(x_2)]=2$. Suppose that $\Q(\sqrt{\Delta_1}) = \Q(\sqrt{\Delta_2})$. Then either $\Delta_1 \in \{9 \Delta_2 / 4, 4\Delta_2, 9 \Delta_2, 16\Delta_2\}$, or $x_2 \in \Q$.	
\end{lem}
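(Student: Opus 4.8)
The plan is to read off the conclusion from the lattice of ring class fields together with the analytic class number formula for orders. Write $K = \Q(\sqrt{\Delta_1}) = \Q(\sqrt{\Delta_2})$, let $D$ be its fundamental discriminant, and put $\Delta_1 = f_1^2 D$, $\Delta_2 = f_2^2 D$. From $[\Q(x_1):\Q(x_2)]=2$ I get both $h(\Delta_1) = 2h(\Delta_2)$ and $\Q(x_2) \subseteq \Q(x_1)$; since $K(x_i) = K \cdot \Q(x_i)$, taking the compositum with $K$ gives the inclusion of ring class fields $K(x_2) \subseteq K(x_1)$. Write $\mathcal{O}_d$ for the order of conductor $d$ in $K$ and $R_d$ for its ring class field, so $R_{f_i} = K(x_i)$ and $\gal(R_d/K) \cong \mathrm{cl}(\mathcal{O}_d)$, whence $[R_d : K] = h(\mathcal{O}_d)$.

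The crucial point is that $R_{f_2} \subseteq R_{f_1}$ does \emph{not} force $f_2 \mid f_1$. Let $g = \gcd(f_1, f_2)$. Using the standard fact that the ring class fields satisfy $R_d \cap R_e = R_{\gcd(d,e)}$ (the conductor-to-field map is order reversing but not injective), the inclusion yields $R_{f_2} = R_{f_1} \cap R_{f_2} = R_g$, and therefore the class number identity $h(\mathcal{O}_{f_2}) = h(\mathcal{O}_g)$, with $g \mid f_1$ and $g \mid f_2$. This collapse, in which genuinely $f_2 \nmid f_1$ can occur, is exactly what will produce the non-square ratio $9\Delta_2/4$; handling it correctly, rather than naively assuming $f_2 \mid f_1$, is the main obstacle in the proof.

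With this reduction I would invoke the class number formula
\[ h(\mathcal{O}_f) = \frac{f\, h(\mathcal{O}_K)}{[\mathcal{O}_K^\times : \mathcal{O}_f^\times]} \prod_{p \mid f}\left(1 - \left(\frac{D}{p}\right)\frac{1}{p}\right). \]
Write $f_1 = g k_1$ and $f_2 = g k_2$; since $g = \gcd(f_1,f_2)$ we have $\gcd(k_1,k_2) = 1$. I may assume $x_2 \notin \Q$, since otherwise the second alternative of the lemma holds; then $h(\mathcal{O}_g) = h(\mathcal{O}_{f_2}) \geq 2$, so $g \geq 2$ whenever $D \in \{-3,-4\}$ (as $h(\mathcal{O}_1)=1$ there), and in every case $g, f_2, f_1$ share a common unit index, so these indices cancel in the ratios. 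The identities $h(\mathcal{O}_{f_2}) = h(\mathcal{O}_g)$ and $h(\mathcal{O}_{f_1}) = 2h(\mathcal{O}_g)$ then become
\[ k_2 \prod_{\substack{p \mid k_2 \\ p \nmid g}} \frac{p - \left(\frac{D}{p}\right)}{p} = 1, \qquad k_1 \prod_{\substack{p \mid k_1 \\ p \nmid g}} \frac{p - \left(\frac{D}{p}\right)}{p} = 2. \]
Distributing $k_i$ across the product makes each side a product of positive integers: a prime $p \mid k_i$ with $p \mid g$ contributes $p^{e_p}$, while a prime $p \mid k_i$ with $p \nmid g$ contributes $p^{e_p - 1}\bigl(p - (\frac{D}{p})\bigr)$. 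A short case analysis on these local factors — a factor equals $1$ only for $p = 2$ split with $e_p = 1$, and equals $2$ only in a handful of cases ($p=2$ ramified, $p=3$ split, or $p=2$ split with $e_p = 2$, or $p=2 \mid g$) — forces $k_2 \in \{1,2\}$ and $k_1 \in \{2,3,4\}$, together with the indicated splitting conditions on $2$ and $3$.

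Finally I would combine these using $\gcd(k_1,k_2) = 1$. If $k_2 = 1$ then $f_2 = g \mid f_1$ and $\Delta_1 = k_1^2\Delta_2 \in \{4\Delta_2, 9\Delta_2, 16\Delta_2\}$. If instead $k_2 = 2$ (so $2$ splits and $2 \nmid g$), then coprimality forces $k_1$ to be odd, hence $k_1 = 3$; here $\Delta_2 = 4g^2 D$ and $\Delta_1 = 9g^2 D = 9\Delta_2/4$. Together these give $\Delta_1 \in \{9\Delta_2/4, 4\Delta_2, 9\Delta_2, 16\Delta_2\}$, which would complete the proof. The only genuinely delicate ingredients are the ring-class-field collapse $R_{f_2} = R_g$ of the second paragraph and the Kronecker-symbol bookkeeping in the two factorisation problems; the remaining steps are routine.
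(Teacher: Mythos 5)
Your overall strategy is the same as the paper's: translate the hypothesis into a containment of ring class fields, convert that into class-number identities via the class number formula for orders, and solve the resulting multiplicative equations for the conductor ratios. The differences are organisational rather than substantive, though not unwelcome: the paper works with $f=\mathrm{lcm}(f_1,f_2)$ and the compositum formula $\rcf(K,f_1)\rcf(K,f_2)=\rcf(K,f)$, which can fail for $D\in\{-3,-4\}$ and so forces a separate case analysis there, whereas you work with $g=\gcd(f_1,f_2)$ and the intersection formula $R_d\cap R_e=R_{\gcd(d,e)}$, and dispose of $D\in\{-3,-4\}$ uniformly by noting that $h(\mathcal{O}_g)=h(\mathcal{O}_{f_2})\geq 2$ forces $g\geq 2$ there, so all unit indices cancel. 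The intersection formula is indeed true unconditionally (it follows from the idelic description of ring class fields, since the relevant local unit groups multiply to the one for the gcd), but it is less standard than the compositum formula and should be proved or referenced rather than asserted.

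The genuine problem is in your case analysis of the equation whose right-hand side is $2$. Writing the left-hand side as a product of the positive integers $p^{e_p}$ (for $p\mid g$) and $p^{e_p-1}\bigl(p-\bigl(\frac{D}{p}\bigr)\bigr)$ (for $p\nmid g$), the value $2$ is attained not only by a single factor equal to $2$ with all others absent, but also by a factor $2$ at $p=3$ together with a factor $1$ at $p=2$: if $2$ and $3$ both split in $K$ and neither divides $g$, then $k_1=6$ gives $6\cdot\frac{1}{2}\cdot\frac{2}{3}=2$. So the correct conclusion is $k_1\in\{2,3,4,6\}$, and the branch $k_2=1$, $k_1=6$ yields $\Delta_1=36\Delta_2$, which is not in the lemma's list. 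This case is not vacuous: for $D=-23$ one has $D\equiv 1\bmod 8$ and $D\equiv 1\bmod 3$, so both primes split, $h(-23)=3$, $h(-828)=6$, and the generalized dihedral structure of $\gal(\rcf(K,6)/\Q)$ shows that a suitable conjugate $x_2$ of discriminant $-23$ satisfies $\Q(x_2)\subset\Q(x_1)$ for $x_1$ of discriminant $-828=36\cdot(-23)$. You should know that the paper's own proof contains exactly the same omission (its list $f/f_2\in\{2,3,4\}$ is missing the solution $f/f_2=6$), so this is a defect of the published argument and not only of your write-up; but as written your proof does not establish the stated conclusion, and unless some further argument excludes the split-split configuration, the value $36\Delta_2$ must be added to the statement.
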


\begin{proof}
	The argument is a modification of the proof in \cite{AllombertBiluMadariaga15} of Lemma~\ref{lem:field}. Given an imaginary quadratic field $K$ of (fundamental) discriminant $D$ and an integer $f \geq 1$, we write $\rcf(K,f)$ for the ring class field of the imaginary quadratic order of discriminant $\Delta = f^2 D$.
	
	Let $x_1, x_2$ be singular moduli satisfying the hypotheses of the lemma. Denote $K$ the field $\Q(\sqrt{\Delta_1}) = \Q(\sqrt{\Delta_2})$. The singular moduli $x_1, x_2$ have the same fundamental discriminant (the discriminant of the field $K$), which we denote $D$. We may then write $\Delta_1 = f_1^2 D$, $\Delta_2 = f_2^2 D$. Note that $\Delta_1 \neq \Delta_2$. Let $f=\mathrm{lcm}(f_1,f_2)$.
	
	Suppose in addition that $D \neq -3,-4$. Then \cite[Proposition~3.1]{AllombertBiluMadariaga15}
	\[\rcf(K,f_1)\rcf(K,f_2)=\rcf(K,f),\]
	where the left hand side denotes the compositum of $\rcf(K,f_1)$ and $\rcf(K,f_2)$. By the theory of complex multiplication $\rcf(K,f_i)=K(x_i)$ for $i=1,2$. Thus $\rcf(K,f)=K(x_1)$ since $x_2 \in \Q(x_1)$. Therefore
	\[h(f^2 D)=[\rcf(K,f) : K]=[\rcf(K,f_1) : K]= h(f_1^2 D).\]
	Also, 
	\[  [\Q(x_1) : \Q] = 2 [\Q(x_2) : \Q],\] 
	and thus $h(f^2 D)=h(f_1^2 D) = 2h(f_2^2 D)$. As in the proof of \cite[Proposition~4.3]{AllombertBiluMadariaga15}, one may then use the class number formula \cite[(6)]{AllombertBiluMadariaga15} to obtain that 
	\[\frac{f}{f_1} \prod_{\substack{p \mid f\\ p \nmid f_1}} (1-\Bigl(\frac{D}{p}\Bigr) \frac{1}{p}) = 1\]
	and 
	\[\frac{f}{f_2} \prod_{\substack{p \mid f\\ p \nmid f_2}} (1-\Bigl(\frac{D}{p}\Bigr) \frac{1}{p}) = 2,\]
	where $\bigl(\frac{D}{p}\bigr)$ denotes the Kronecker symbol.
	This implies that $f/f_1 \in \{1,2\}$ and $f/f_2 \in \{2,3,4\}$. One thus has that $f_1/f_2 \in \{3/2,2,3,4\}$ since $f_1 \neq f_2$. Hence $\Delta_1 \in \{9 \Delta_2/4, 4 \Delta_2, 9 \Delta_2, 16 \Delta_2\}$.
	
	Now let $D \in \{-3,-4\}$. If $\gcd(f_1,f_2)>1$, then, by \cite[Proposition~3.1]{AllombertBiluMadariaga15} again,
		\[\rcf(K,f_1)\rcf(K,f_2)=\rcf(K,f),\]
		and the above proof works. If $f_1 =1$, then $\Q(x_1)=\Q$, a contradiction. If $f_2 =1$, then $x_2 \in \Q$ and we are done. 
				
		So we may now assume that $f_1, f_2 >1$ and $\gcd(f_1,f_2)=1$. So $f=f_1 f_2$. In this case, by \cite[Proposition~3.1]{AllombertBiluMadariaga15},
		\[2h(f_2^2 D) = h(f_1^2 D) = l^{-1} h(f^2 D),\]
		where $l=2$ for $D=-4$ and $l=3$ for $D=-3$. We now apply again the class number formula to obtain that
		\[f_1 \prod_{p \mid f_1} (1-\Bigl(\frac{D}{p}\Bigr) \frac{1}{p}) = 2l\]
		and 
		\[f_2 \prod_{p \mid f_2} (1-\Bigl(\frac{D}{p}\Bigr) \frac{1}{p}) = l.\]
	 Inspecting the possibilities for $f_2$, we see that 
		\[ \Delta_2 \in \{-12,-16,-27\}.\]
		 But then $x_2 \in \Q$.
		\end{proof}
	
	\begin{lem}\label{lem:subfielddiffund}
		Let $x_1, x_2$ be singular moduli such that $[\Q(x_1) : \Q(x_2)]=2$. Suppose that $\Q(\sqrt{\Delta_1}) \neq \Q(\sqrt{\Delta_2})$. Then one of the following holds:
		\begin{enumerate}
			\item at least one of $\Delta_1$ or $\Delta_2$ is listed in \cite[Table~2.1]{AllombertBiluMadariaga15} and the corresponding field $\Q(x_i)$ is Galois;
			\item  $h(\Delta_1) \geq 128$.
		\end{enumerate}
		\end{lem}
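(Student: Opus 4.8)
The plan is to classify the situation according to whether the fields $\Q(x_1)$ and $\Q(x_2)$ are Galois over $\Q$, using the fact that $\Q(x)$ is Galois over $\Q$ precisely when the ring class group is an elementary abelian $2$-group, that is, when $\Delta$ has one class per genus; these are the (finitely many, idoneal-type) discriminants recorded in \cite[Table~2.1]{AllombertBiluMadariaga15}. Throughout write $K_i = \Q(\sqrt{\Delta_i})$ and let $M_i = K_i(x_i) = \rcf(K_i, f_i)$ be the ring class field, which is Galois over $\Q$ with $\gal(M_i/\Q) \cong \mathrm{cl}(\ord_i) \rtimes \langle c \rangle$ generalized dihedral, $c$ denoting complex conjugation and acting by inversion.

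First I would identify the Galois closure of $\Q(x_i)$ over $\Q$, namely the splitting field of $H_{\Delta_i}$. This field lies in $M_i$ and composes with $K_i$ to give $M_i$; comparing degrees shows it is all of $M_i$ when $\Q(x_i)$ is not Galois and is $\Q(x_i)$ itself when it is. Consequently, if either $\Q(x_1)$ or $\Q(x_2)$ is Galois over $\Q$, the corresponding $\Delta_i$ lies in \cite[Table~2.1]{AllombertBiluMadariaga15} and we are in conclusion~(1). So assume neither is Galois. Then the Galois closure of $\Q(x_2)$ is $M_2$; since $\Q(x_2) \subseteq \Q(x_1) \subseteq M_1$ and $M_1/\Q$ is Galois, we obtain $M_2 \subseteq M_1$, and comparing degrees gives $[M_1 : M_2] = h(\Delta_1)/h(\Delta_2) = 2$.

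Next I would exploit the dihedral structure. As $M_2/\Q$ is Galois, $\gal(M_1/M_2)$ is a normal subgroup of order $2$ of $\gal(M_1/\Q)$, hence central. Since neither field is Galois, $\mathrm{cl}(\ord_1)$ is not an elementary abelian $2$-group, so the centre of $\gal(M_1/\Q)$ is exactly $\mathrm{cl}(\ord_1)[2] \subseteq \gal(M_1/K_1)$. Therefore the nontrivial element of $\gal(M_1/M_2)$ fixes $K_1$, whence $K_1 \subseteq M_2$. Thus $K_1$ and $K_2$ are \emph{distinct} imaginary quadratic subfields of each of the ring class fields $M_1$ and $M_2$.

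Finally, the bound. Genus theory for the form class group of discriminant $\Delta_i$ shows that every quadratic subfield of $M_i$ is of the form $\Q(\sqrt{\Delta'})$ for $\Delta'$ a discriminant-divisor of $\Delta_i$; in particular every prime ramifying in such a subfield divides $\Delta_i$. Applying this to $K_1 \subseteq M_2$ and $K_2 \subseteq M_1$ forces every prime ramifying in $K_1$ to divide $\Delta_2$ and every prime ramifying in $K_2$ to divide $\Delta_1$. Since $\Delta_1, \Delta_2$ have different fundamental discriminants, these divisibilities constrain the fundamental discriminants and conductors much as in the proof of Lemma~\ref{lem:subfieldsamefund}; feeding them into the class number formula \cite[(6)]{AllombertBiluMadariaga15} together with $h(\Delta_1) = 2h(\Delta_2)$ (and the usual adjustments for $D \in \{-3,-4\}$) should leave only finitely many pairs with $h(\Delta_1) < 128$, each of which one checks has $\mathrm{cl}(\ord_1)$ elementary abelian $2$, i.e.\ $\Q(x_1)$ Galois, contradicting the standing assumption. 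Hence $h(\Delta_1) \ge 128$, which is conclusion~(2). I expect this last step to be the main obstacle: converting the genus-theoretic divisibility conditions, via the class number formula, into the explicit threshold $128$, while also absorbing the at-most-one possibly-missing idoneal discriminant (which in any case has enormous class number and so satisfies the bound).
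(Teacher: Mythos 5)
Your opening reduction, and the case where one of $\Q(x_1)$, $\Q(x_2)$ is Galois, essentially match the paper: there one invokes Corollaries 2.2 and 3.3 and Remark 2.3 of \cite{AllombertBiluMadariaga15} to conclude that the relevant $\Delta_i$ is either in Table~2.1 or has $h(\Delta_i)\geq 128$. (You state the Table~2.1 alternative too categorically at first --- the list of one-class-per-genus discriminants is not known unconditionally to be complete --- but you do acknowledge the possibly-missing discriminant at the end, and since $h(\Delta_1)=2h(\Delta_2)$ that case lands in conclusion~(2) anyway.) Your degree computation $[M_1:M_2]=2$ in the ``neither Galois'' case also agrees with the paper, and your observation that $\gal(M_1/M_2)$ is a central subgroup of order $2$, hence lies in $\mathrm{cl}(\ord_1)[2]\subseteq\gal(M_1/K_1)$, so that $K_1\subseteq M_2$, is correct and is a genuinely nice use of the generalized dihedral structure.

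The genuine gap is the endgame. Having found that $K_1$ and $K_2$ are distinct quadratic subfields of $M_2$, you propose to extract the bound $h(\Delta_1)\geq 128$ from genus theory together with the class number formula \cite[(6)]{AllombertBiluMadariaga15}. But the divisibility conditions you obtain (primes ramified in $K_1$ divide $\Delta_2$, and vice versa) carry no information about class numbers, and the class number formula only compares orders inside a \emph{single} imaginary quadratic field, whereas here the fundamental discriminants differ; there is no analogue of the conductor-ratio computation of Lemma~\ref{lem:subfieldsamefund} available, and no reason why only finitely many pairs with $h(\Delta_1)<128$ should survive. You flag this step yourself as the main obstacle, and as written it does not go through. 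The paper avoids the issue entirely: in the ``neither Galois'' case it derives an outright contradiction by group theory alone. Setting $H_i=\gal(M_1/\Q(\sqrt{\Delta_i}))$ and $H=H_1\cap H_2$, one has $H\cong(\Z/2\Z)^n$, and each $H_i$, containing $H$ with index $2$, must be isomorphic to $(\Z/4\Z)\times(\Z/2\Z)^{n-1}$ (the elementary abelian option would force $\Q(x_i)$ to be Galois by \cite[Corollary~3.3]{AllombertBiluMadariaga15}); this forces $G\cong D_8\times(\Z/2\Z)^{n-1}$, which has a unique subgroup of that isomorphism type, whence $H_1=H_2$ and $\Q(\sqrt{\Delta_1})=\Q(\sqrt{\Delta_2})$, contradicting the hypothesis. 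You should replace your genus-theoretic final step with an argument of this kind.
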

		
		\begin{proof}
		This proof is a modified version of \cite[Theorem~4.1]{AllombertBiluMadariaga15}. If $\Q(x_1)$ is Galois over $\Q$, then by Corollaries~3.3 and 2.2 and Remark~2.3 of \cite{AllombertBiluMadariaga15}, either $\Delta_1$ is listed in \cite[Table~2.1]{AllombertBiluMadariaga15} or $h(\Delta_1) \geq 128$. If $\Q(x_2)$ is Galois over $\Q$, then similarly either $\Delta_2$ is listed in \cite[Table~2.1]{AllombertBiluMadariaga15} or $h(\Delta_2) \geq 128$ (and so certainly $h(\Delta_1) \geq 128$). 
		
		So we may now suppose that neither $\Q(x_1)$ nor $\Q(x_2)$ is Galois over $\Q$. We will show this leads to a contradiction. Let $M_1$ be the Galois closure of $\Q(x_1)$ over $\Q$. Then $M_1= \Q(\sqrt{\Delta_1},x_1) \supset \Q(x_2)$. Let $M_2$ be the Galois closure of $\Q(x_2)$ over $\Q$. Then $M_2 = \Q(\sqrt{\Delta_2}, x_2)$. Also $M_2 \subset M_1$ since $M_1$ is Galois and contains $\Q(x_2)$. Since $\Q(x_1), \Q(x_2)$ are not Galois, one has that $\sqrt{\Delta_i} \notin \Q(x_i)$. Hence $[M_1 : \Q]= 2 h(\Delta_1)$ and $[M_2 : \Q]= 2 h(\Delta_2)$. In particular, $[M_1 : M_2]=2$ since $h(\Delta_1) = 2 h(\Delta_2)$.
		
		Let $G = \gal(M_1/\Q)$, $H=\gal(M_1/\Q(\sqrt{\Delta_1},\sqrt{\Delta_2}))$, and $H_i =  \gal(M_1/\Q(\sqrt{\Delta_i}))$ for $i=1,2$. So $H = H_1 \cap H_2$, $[H_1 : H]=2$, and $[H_2 : H]=2$. As in the proof of \cite[Theorem~4.1]{AllombertBiluMadariaga15}, one has that $H$ is isomorphic to $(\Z / 2\Z)^n$ for some $n$. Each of $H_1, H_2$ contains $H$ as an index $2$ subgroup. So $H_1, H_2$ must each be isomorphic to either $(\Z / 2\Z)^{n+1}$ or $(\Z / 4 \Z) \times (\Z / 2\Z)^{n-1}$. If $H_1 \cong (\Z / 2\Z)^{n+1}$, then by \cite[Corollary~3.3]{AllombertBiluMadariaga15}, the field $\Q(x_1)$ is Galois, a contradiction. So we may assume that $H_1 \cong (\Z / 4 \Z) \times (\Z / 2\Z)^{n-1}$.
		
		Suppose next that $H_2 \cong (\Z / 2\Z)^{n+1}$. Observe that $H_2$ is abelian, and hence all its subgroups are normal. We have the extension of fields $M_1 / \Q(\sqrt{\Delta_2}) / \Q$, where $M_1 / \Q$ is Galois, and so the extension $M_1 / \Q(\sqrt{\Delta_2})$ is also Galois and its Galois group is $H_2$. Therefore, considering the extension $M_1 / M_2 / \Q(\sqrt{\Delta_2})$, we obtain that $\gal(M_2 / \Q(\sqrt{\Delta_2}))$ is isomorphic to the quotient $H_2 / \gal(M_1 / M_2)$. Note that $\gal(M_1 / M_2) \leq H_2$ and $\lvert \gal(M_1 / M_2) \rvert = 2$. Thus, since $H_2 \cong (\Z / 2\Z)^{n+1}$, we must have that $\gal(M_1 /M_2) \cong (\Z / 2\Z)$ and $\gal(M_2 / \Q(\sqrt{\Delta_2})) \cong (\Z / 2 \Z)^n$. This implies, by \cite[Corollary~3.3]{AllombertBiluMadariaga15} again, that $\Q(x_2)$ is Galois, a contradiction.
		
		Hence we must have that $H_1 \cong H_2 \cong (\Z / 4 \Z) \times (\Z / 2\Z)^{n-1}$. Exactly as in \cite[Theorem~4.1]{AllombertBiluMadariaga15}, this implies that $G \cong D_8 \times (\Z / 2\Z)^{n-1}$, where $D_8$ denotes the dihedral group with $8$ elements. The group $D_8 \times (\Z / 2\Z)^{n-1}$ has only one subgroup isomorphic to $(\Z / 4 \Z) \times (\Z / 2\Z)^{n-1}$, and hence one must have that $H_1 = H_2$. This though implies that $\Q(\sqrt{\Delta_1})=\Q(\sqrt{\Delta_2})$, a contradiction. 
	\end{proof}

The other result we use is on the fields generated by products of pairs of non-zero singular moduli, and establishes that such a field is ``close to'' the field generated by the pair of singular moduli. This result is due to Faye and Riffaut \cite{FayeRiffaut18}. 

\begin{lem}[{\cite[Theorem~1.3]{FayeRiffaut18}}]\label{lem:field}
	Let $x_1, x_2$ be distinct non-zero singular moduli of discriminants $\Delta_1, \Delta_2$ respectively. Then $\Q(x_1x_2) = \Q(x_1, x_2)$ unless $\Delta_1 = \Delta_2$, in which case $[\Q(x_1, x_2) : \Q(x_1x_2)] \leq 2$.
\end{lem}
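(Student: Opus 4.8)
The plan is to bound the index $[\Q(x_1,x_2):\Q(x_1x_2)]$ by counting, in Galois-theoretic terms, the multiplicative coincidences among conjugate singular moduli. The inclusion $\Q(x_1x_2)\subseteq\Q(x_1,x_2)$ is immediate, so the content lies in bounding how far $\Q(x_1x_2)$ can fall short of $\Q(x_1,x_2)$. Fix a Galois extension $N/\Q$ containing $x_1,x_2$ and all their conjugates, and write $O_i$ for the set of singular moduli of discriminant $\Delta_i$, which by the material of Section~\ref{subsec:singmod} is precisely the $\gal(N/\Q)$-orbit of $x_i$. An automorphism $\sigma$ fixes $x_1x_2$ if and only if $\sigma(x_1)\sigma(x_2)=x_1x_2$, and the coset of $\sigma$ modulo $\gal(N/\Q(x_1,x_2))$ is determined by the pair $(\sigma(x_1),\sigma(x_2))\in O_1\times O_2$. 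Hence $[\Q(x_1,x_2):\Q(x_1x_2)]$ equals the number of pairs $(y_1,y_2)$ in the $\gal(N/\Q)$-orbit of $(x_1,x_2)$ with $y_1y_2=x_1x_2$, and the task reduces to showing that this factorisation of $x_1x_2$ as a discriminant-$\Delta_1$ modulus times a discriminant-$\Delta_2$ modulus is essentially unique.

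The generic case is handled by the size estimates of Section~\ref{subsec:bounds}. Suppose $y_1y_2=x_1x_2$ with $(y_1,y_2)\neq(x_1,x_2)$; then necessarily $y_1\neq x_1$ and $y_2\neq x_2$, since fixing one coordinate forces the other. Choose an embedding $\iota\colon N\to\C$ so that $\iota(x_1)$ is the dominant singular modulus of discriminant $\Delta_1$, namely the unique one with $a=1$ furnished by Lemma~\ref{lem:dom}(1); then for $\lvert\Delta_1\rvert\geq 23$ one has $\lvert\iota(x_1)\rvert\geq 0.9994\,e^{\pi\lvert\Delta_1\rvert^{1/2}}$, whereas $\iota(y_1)$, a conjugate of $x_1$ distinct from $\iota(x_1)$, corresponds to a triple with $a\geq 2$, so $\lvert\iota(y_1)\rvert\leq e^{\pi\lvert\Delta_1\rvert^{1/2}/2}+2079$. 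Rewriting the identity as $\iota(x_1)/\iota(y_1)=\iota(y_2)/\iota(x_2)$ and bounding the right-hand side by the ratio of the largest to the smallest modulus of discriminant $\Delta_2$ (via the upper bound $e^{\pi\lvert\Delta_2\rvert^{1/2}}+2079$ and the lower bound $3500\lvert\Delta_2\rvert^{-3}$) yields an inequality of the shape $e^{\pi\lvert\Delta_1\rvert^{1/2}/2}\lesssim e^{\pi\lvert\Delta_2\rvert^{1/2}}\lvert\Delta_2\rvert^{3}$. This is contradictory whenever $\lvert\Delta_1\rvert$ exceeds $\lvert\Delta_2\rvert$ by a sufficient margin, which eliminates all pairs with $\Delta_1\neq\Delta_2$ outside a region where $\lvert\Delta_1\rvert$ and $\lvert\Delta_2\rvert$ are comparable.

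The main obstacle is exactly this comparable-discriminant regime, where the crude archimedean comparison loses because the dominant term on the two sides is of the same order. Here I would fall back on the field-theoretic constraints of Section~\ref{subsec:fields}: a nontrivial coincidence forces $\Q(x_1)$ and $\Q(x_2)$ to be intertwined, and Lemmas~\ref{lem:samefield}, \ref{lem:subfieldsamefund}, and \ref{lem:subfielddiffund} then confine $(\Delta_1,\Delta_2)$ either to the case $\Delta_1=\Delta_2$ or to an explicit finite list; the remaining finitely many discriminant pairs can be checked directly by computing the relevant Hilbert class polynomials and products, in the manner of Section~\ref{sec:elim}. When $\Delta_1=\Delta_2$ the factorisation genuinely can fail to be unique, since the pair $(x_2,x_1)$ yields the same product and complex conjugation supplies the corresponding order-$2$ symmetry of the ring class field; the residual step is to show that no factorisation $y_1y_2=x_1x_2$ into two discriminant-$\Delta_1$ moduli other than $(x_1,x_2)$ and $(x_2,x_1)$ occurs, which bounds the index by $2$ and again reduces, once the bound argument has removed large discriminants, to a finite verification. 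I expect the comparable-discriminant range to demand the most care, as it is precisely where both the archimedean estimates and the simple dominance argument are too weak and one must invoke the structural field results together with explicit computation.
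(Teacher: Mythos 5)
The paper does not prove this lemma: it is imported verbatim as Theorem~1.3 of Faye and Riffaut \cite{FayeRiffaut18}, so there is no internal proof to compare against. Judged on its own terms, your proposal correctly sets up the Galois-theoretic count (the index $[\Q(x_1,x_2):\Q(x_1x_2)]$ equals the number of pairs $(y_1,y_2)$ in the orbit of $(x_1,x_2)$ with $y_1y_2=x_1x_2$) and the dominant-conjugate estimate, which is indeed the natural opening move. But the argument stops exactly where the theorem becomes hard: the archimedean comparison only rules out a nontrivial coincidence when $\lvert\Delta_1\rvert$ exceeds roughly $4\lvert\Delta_2\rvert$ (or vice versa), which leaves infinitely many pairs of discriminants, not a finite list.

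The step meant to dispose of the comparable-discriminant regime is not a proof. You assert that a nontrivial coincidence forces $\Q(x_1)$ and $\Q(x_2)$ to be ``intertwined'' so that Lemmas~\ref{lem:samefield}, \ref{lem:subfieldsamefund} and \ref{lem:subfielddiffund} apply, but those lemmas carry the hypotheses $\Q(x_1)=\Q(x_2)$ or $[\Q(x_1):\Q(x_2)]=2$, and nothing in your argument derives either hypothesis from the mere existence of a pair $(y_1,y_2)\neq(x_1,x_2)$ with $y_1y_2=x_1x_2$. (In the paper those lemmas are only applied after the present lemma has been used to upgrade $\Q(x_i)=\Q(x_jx_k)$ to $\Q(x_i)=\Q(x_j,x_k)$, so leaning on them here is close to circular.) Likewise, when $\Delta_1=\Delta_2$ the bound $[\Q(x_1,x_2):\Q(x_1x_2)]\leq 2$ requires showing that the only possible second factorisation is $(x_2,x_1)$; you label this ``the residual step'' without an argument, and the ``finite verification'' you invoke twice is neither specified nor carried out. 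The outline is a reasonable caricature of the Faye--Riffaut strategy, but the comparable-discriminant analysis and the index-two claim --- the actual content of their theorem --- are missing.
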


\section{An effective bound}\label{sec:pf}
We now turn to the proof of Theorem~\ref{thm:main} itself. Suppose $x_1, x_2, x_3$ are singular moduli such that $x_1 x_2 x_3 = \alpha \in \Q^{\times}$. Write $\Delta_i$ for their respective discriminants and $h_i$ for the corresponding class numbers $h(\Delta_i)$. In this section, we will reduce the possible $(\Delta_1, \Delta_2, \Delta_3)$ to an (effectively) finite list. Without loss of generality, assume that $h_1 \geq h_2 \geq h_3$.  

If the $x_i$ are not pairwise distinct, then (1) of Theorem~\ref{thm:main} must hold by a theorem of Riffaut \cite[Theorem~1.6]{Riffaut19}, which classifies all pairs $(x,y)$ of singular moduli satisfying $x^m y^n \in \Q^{\times}$ for some $m, n \in \Z \setminus \{0\}$. So we may and do assume that $x_1, x_2, x_3$ are pairwise distinct. If $h_3=1$, then $x_3 \in \Q^{\times}$ and hence $x_1 x_2 \in \Q^{\times}$. Thus by the two dimensional case Theorem~\ref{thm:2d}, proved in \cite{BiluLucaMadariaga16}, either $x_1, x_2 \in \Q$ or $x_1, x_2$ are of degree $2$ and conjugate over $\Q$. Thus either (1) or (2) in Theorem~\ref{thm:main} holds and we are done. We therefore assume subsequently that $h_1 \geq h_2 \geq h_3 \geq 2$.

Clearly we have that $\Q(x_1) = \Q(x_2 x_3)$. Thus,
by Lemma~\ref{lem:field}, we have that
\[[\Q(x_1) : \Q] = [\Q(x_2 x_3) : \Q]=  \begin{cases} \mbox{ either } [\Q(x_2, x_3) : \Q], \\
\mbox{ or } \frac{1}{2}[\Q(x_2, x_3) : \Q].
\end{cases}\]
Noting that $h_2 = [\Q(x_2) : \Q]$ and $h_3 = [\Q(x_3) : \Q]$ each divide $[\Q(x_2,x_3) : \Q]$, we must have that $h_2, h_3 \mid 2 [\Q(x_1) : \Q]$. Hence $h_2, h_3 \mid 2 h_1$. Symmetrically we have also that $h_1, h_2 \mid 2h_3$ and $h_1, h_3 \mid 2 h_2$.  Then, since $h_1 \geq h_2 \geq h_3$, one of the following must hold: either $h_1 = h_2 = h_3$, or $h_1 = h_2 = 2h_3$, or $h_1 = 2 h_2 = 2 h_3$. We consider each of these cases in turn.

We will write $(x_1', x_2', x_3')$ for a conjugate of $(x_1, x_2, x_3)$, where $x_i'$ is the conjugate of $x_i$ associated to an element $(a_i', b_i', c_i') \in T_{\Delta_i}$. Computations in this section were carried out in PARI \cite{PARI2}.

\subsection{The case $h_1 = h_2 = h_3$}\label{subsec:equal}
Write $h = h_1 = h_2 = h_3$. We split this situation into subcases, depending as to whether the $\Delta_i$ are equal.

\subsubsection{The subcase $\Delta_1 = \Delta_2 = \Delta_3$}\label{subsubsec:equaldisc}
Write $\Delta$ for this shared discriminant. The $x_i$ are thus all singular moduli of discriminant $\Delta$ and hence are all conjugate. Since the $x_i$ are pairwise distinct, they must then be of degree at least $3$, so $h \geq 3$. If $h=3$, then we are in case (3) of Theorem~\ref{thm:main}. So we may assume that $h \geq 4$. 

Taking conjugates as necessary, we may assume that $x_1$ is dominant. Since $h \geq 4$, one certainly has $\lvert \Delta \rvert \geq 23$. Thus, by the bounds in Subsection~\ref{subsec:bounds}, one has the lower bound for $\lvert \alpha \rvert$ given by
\[\lvert \alpha \rvert \geq (0.9994 e^{\pi \lvert \Delta \rvert^{1/2}}) (\min\{4.4 \times 10^{-5}, 3500 \lvert \Delta \rvert^{-3}\})^2.\]

We now establish upper bounds for $\lvert \alpha \rvert$, incompatible with this lower bound for suitably large $\lvert \Delta \rvert$. The larger the class number $h$, the better these bounds will be. Let $\sigma_1, \ldots, \sigma_h$ be the automorphisms of $\Q(x_1)$. Then $\sigma_i(x_1, x_2, x_3) = (x_1', x_2', x_3')$, where $x_1', x_2', x_3'$ are themselves singular moduli of discriminant $\Delta$, since these singular moduli form a complete Galois orbit over $\Q$. Further, if $\sigma_i(x_k) = \sigma_j(x_k)$, then $i=j$ since the action is sharply transitive. Thus each singular moduli of discriminant $\Delta$ occurs at most once among the $\sigma_i (x_k)$ for $i=1, \ldots, h$.

Let $k, m_1, m_2, m_3 \in \Z$ be as given in the following table. When $h \geq k$, we can by Lemma~\ref{lem:dom} find a conjugate $(x_1', x_2', x_3')$ of $(x_1, x_2, x_3)$, where each $x_i'$ is a singular modulus corresponding to a tuple $(a_i', b_i', c_i') \in T_{\Delta}$ with $a_i' \geq m_i$. 

	\vspace{0.6cm}

\begin{center}
	\begin{tabular}{  c | c | c | c }\label{tab:conj}
		
			$m_1$ & $m_2$ & $m_3$ & $k$ \\ \hline
			$3$ & $3$ & $4$ & $12$\\
			$3$ & $4$ & $4$ & $14$\\
			$4$ & $4$ & $4$ & $16$\\
			$4$ & $4$ & $5$ & $18$\\
			$4$ & $5$ & $5$ & $20$\\
	\end{tabular}
\end{center}

\vspace{0.6cm}

Since $x_1' x_2' x_3' = \alpha$, each such conjugate gives rise to an upper bound for $\lvert \alpha \rvert$ of the form
\[ \lvert \alpha \rvert \leq (e^{\pi \lvert \Delta \rvert^{1/2}/m_1}+2079)(e^{\pi \lvert \Delta \rvert^{1/2}/m_2}+2079)(e^{\pi \lvert \Delta \rvert^{1/2}/m_3}+2079).\]
For $(m_1, m_2, m_3)$ as in the above table, these bounds are incompatible with the earlier lower bound for $\lvert \alpha \rvert$ when $\lvert \Delta \rvert$ is suitably large. Explicitly, we obtain that one of the following holds:
\begin{enumerate}
	\item $4 \leq h \leq 11$;
	\item $12 \leq h \leq 13$ and $\lvert \Delta \rvert \leq 30339$;\footnote{We observe that the bound on $\lvert \Delta \rvert$ obtained here is superfluous, since in fact $h \leq 13$ already implies $\lvert \Delta \rvert \leq 20563$, as may be demonstrated in Sage \cite{sagemath} using the function cm\_orders().}
	\item $14 \leq h \leq 15$ and $\lvert \Delta \rvert \leq 4124$;
	\item $16 \leq h \leq 17$ and $\lvert \Delta \rvert \leq 1045$;
	\item $18 \leq h \leq 19$ and $\lvert \Delta \rvert \leq 488$;
	\item $20 \leq h$ and $\lvert \Delta \rvert \leq 334$.
\end{enumerate}

\subsubsection{The subcase where the $\Delta_i$ are not all equal}\label{subsubsec:unequaldisc}
Without loss of generality assume that $\lvert \Delta_1 \rvert > \lvert \Delta_2 \rvert$. Then $\Q(x_3) = \Q(x_1x_2) = \Q(x_1, x_2)$, where the last equality holds by Lemma~\ref{lem:field} since $\Delta_1 \neq \Delta_2$. Thus $\Q(x_1), \Q(x_2) \subset \Q(x_3)$. Since $h_1 = h_2 = h_3$, these inclusions are in fact equalities. Thus $\Q(x_1)= \Q(x_2) = \Q(x_3)$. Denote this field $L$.

Suppose first that $\Q(\sqrt{\Delta_i}) \neq \Q(\sqrt{\Delta_j})$ for some $i, j$. Then by (1) of Lemma~\ref{lem:samefield}, we have that the field $L$ is listed in \cite[Table~4.1]{AllombertBiluMadariaga15}, as are the possible discriminants $\Delta_1, \Delta_2, \Delta_3$.

So we reduce to the situation where $\Q(\sqrt{\Delta_1}) = \Q(\sqrt{\Delta_2}) = \Q(\sqrt{\Delta_3})$. Then by (2) of Lemma~\ref{lem:samefield} either $h=1$ or, for every $i, j$, we have that $\Delta_i / \Delta_j \in \{1, 4, 1/4\}$. Since $h \geq 2$, we must be in the second case. Write $\Delta = \Delta_2$. Then we have that $\Delta \equiv 1 \bmod 8$, $\Delta_1 = 4 \Delta_2 = 4 \Delta$, and either $\Delta_3 = \Delta$ or $\Delta_3 = 4 \Delta = \Delta_1$. Also, $\lvert \Delta_1 \rvert \geq 23$ since $\Delta_1 = 4 \Delta_2$ and $h \geq 2$ implies $\lvert \Delta_2 \rvert \geq 15$.

Suppose first that $\Delta_3 = \Delta$. Taking conjugates, assume that $x_1$ is dominant. Then by the bounds in Subsection~\ref{subsec:bounds} we have the lower bound
\begin{align*}
	\lvert \alpha \rvert \geq &(0.9994 e^{\pi \lvert \Delta_1 \rvert^{1/2}}) (\min\{4.4 \times 10^{-5}, 3500 \lvert \Delta_2 \rvert^{-3}\})\\
	&(\min\{4.4 \times 10^{-5}, 3500 \lvert \Delta_3 \rvert^{-3}\}),\\
	\geq &(0.9994 e^{2 \pi \lvert \Delta \rvert^{1/2}}) (\min\{4.4 \times 10^{-5}, 3500 \lvert \Delta \rvert^{-3}\})^2.
\end{align*}

Since $\Q(x_1) = \Q(x_2) = \Q(x_3)$, the Galois orbit of $(x_1, x_2, x_3)$ has exactly $h$ elements. Each conjugate of $x_i$ occurs exactly once as the $i$th coordinate of a conjugate $(x_1', x_2', x_3')$ of $(x_1, x_2, x_3)$. Since $\Delta \equiv 1 \bmod 8$ and $\Delta_1 = 4 \Delta$, there are no tuples $(a,b,c) \in T_{\Delta_1}$ with $a=2$ because $\Delta_1 \equiv 4 \bmod 16$. Let $k, m_1, m_2, m_3 \in \Z$ be as given in the following table. When $h \geq k$, we can by Lemma~\ref{lem:dom} find a conjugate $(x_1', x_2', x_3')$ of $(x_1, x_2, x_3)$, where each $x_i'$ is a singular modulus corresponding to a tuple $(a_i', b_i', c_i') \in T_{\Delta_i}$ with $a_i' \geq m_i$.

\vspace{0.6cm}

\begin{center}
	\begin{tabular}{  c | c | c | c }\label{tab:conj}
		
		$m_1$ & $m_2$ & $m_3$ & $k$ \\ \hline
		$3$ & $2$ & $2$ & $4$\\
		$3$ & $3$ & $2$ & $6$\\
		$3$ & $3$ & $3$ & $8$\\
	\end{tabular}
\end{center}

\vspace{0.6cm}

Since $x_1' x_2' x_3' = \alpha$, each such conjugate gives rise to an upper bound for $\lvert \alpha \rvert$ of the form
\begin{align*}
	 \lvert \alpha \rvert &\leq (e^{\pi \lvert \Delta_1 \rvert^{1/2}/m_1}+2079)(e^{\pi \lvert \Delta_2 \rvert^{1/2}/m_2}+2079)(e^{\pi \lvert \Delta_3 \rvert^{1/2}/m_3}+2079)\\
	 &= (e^{2\pi \lvert \Delta \rvert^{1/2}/m_1}+2079)(e^{\pi \lvert \Delta \rvert^{1/2}/m_2}+2079)(e^{\pi \lvert \Delta \rvert^{1/2}/m_3}+2079).
	 \end{align*}
For $(m_1, m_2, m_3)$ as in the above table, these bounds are incompatible with the earlier lower bound for $\lvert \alpha \rvert$ when $\lvert \Delta \rvert$ is suitably large. We obtain therefore that one of the following must hold:
\begin{enumerate}
	\item $2 \leq h \leq 3$;
	\item $4 \leq h \leq 5$ and $\lvert \Delta \rvert \leq 367$;
	\item $6 \leq h \leq 7$ and $\lvert \Delta \rvert \leq 163$;
	\item $8 \leq h$ and $\lvert \Delta \rvert \leq 93$.
\end{enumerate}

Now suppose that $\Delta_1=\Delta_3=4\Delta_2$, where $\Delta_2 = \Delta \equiv 1 \bmod 8$. Taking conjugates, assume that $x_1$ is dominant. Then by the bounds in Subsection~\ref{subsec:bounds} we have the lower bound
\begin{align*}
	\lvert \alpha \rvert \geq &(0.9994 e^{\pi \lvert \Delta_1 \rvert^{1/2}}) (\min\{4.4 \times 10^{-5}, 3500 \lvert \Delta_2 \rvert^{-3}\})\\
	&(\min\{4.4 \times 10^{-5}, 3500 \lvert \Delta_3 \rvert^{-3}\}),\\
\geq &(0.9994 e^{2 \pi \lvert \Delta \rvert^{1/2}}) (\min\{4.4 \times 10^{-5}, 3500 \lvert \Delta \rvert^{-3}\})\\
& (\min\{4.4 \times 10^{-5}, 3500 \times 4^{-3} \lvert \Delta \rvert^{-3}\}).
\end{align*}

Since $\Delta \equiv 1 \bmod 8$ and $\Delta_1 = \Delta_3= 4 \Delta$, as before there are no tuples $(a,b,c) \in T_{\Delta_1} = T_{\Delta_3}$ with $a=2$. Therefore, when $h \geq k$, we can by Lemma~\ref{lem:dom} find conjugates $(x_1', x_2', x_3')$ of $(x_1, x_2, x_3)$, where each $x_i'$ is a singular modulus corresponding to a tuple $(a_i', b_i', c_i') \in T_{\Delta_i}$ with $a_i' \geq m_i$, and $k, m_1, m_2, m_3 \in \Z$ given in the following table. 

\vspace{0.6cm}

\begin{center}
	\begin{tabular}{  c | c | c | c }\label{tab:conj}
		
		$m_1$ & $m_2$ & $m_3$ & $k$ \\ \hline
		$3$ & $2$ & $3$ & $4$\\
		$3$ & $3$ & $3$ & $6$\\
		$4$ & $3$ & $3$ & $8$\\
			$4$ & $3$ & $4$ & $10$
	\end{tabular}
\end{center}

\vspace{0.6cm}

Each such conjugate gives rise to an upper bound for $\lvert \alpha \rvert$ of the form
\begin{align*}
	\lvert \alpha \rvert &\leq (e^{\pi \lvert \Delta_1 \rvert^{1/2}/m_1}+2079)(e^{\pi \lvert \Delta_2 \rvert^{1/2}/m_2}+2079)(e^{\pi \lvert \Delta_3 \rvert^{1/2}/m_3}+2079)\\
	&= (e^{2\pi \lvert \Delta \rvert^{1/2}/m_1}+2079)(e^{\pi \lvert \Delta \rvert^{1/2}/m_2}+2079)(e^{2 \pi \lvert \Delta \rvert^{1/2}/m_3}+2079).
\end{align*}
We thus obtain that one of the following holds:
\begin{enumerate}
	\item $2 \leq h \leq 3$;
	\item $4 \leq h \leq 5$ and $\lvert \Delta \rvert \leq 5781$;
	\item $6 \leq h \leq 7$ and $\lvert \Delta \rvert \leq 650$;
	\item $8 \leq h \leq 9$ and $\lvert \Delta \rvert \leq 192$;
	\item $10 \leq h$ and $\lvert \Delta \rvert \leq 92$.
\end{enumerate}

\subsection{The case $h_1 = h_2 = 2h_3$}\label{subsec:twobig}
Since $h_3 \neq h_1, h_2$, we have that $\Delta_3 \neq \Delta_1, \Delta_2$. Then $\Q(x_2) = \Q(x_1 x_3) = \Q(x_1, x_3)$, where the last equality holds by Lemma~\ref{lem:field} since $\Delta_1 \neq \Delta_3$. Hence, $\Q(x_1) \subset \Q(x_2)$. Since $h_1 = h_2$, this is in fact an equality $\Q(x_1) = \Q(x_2)$. 

Suppose $\Delta_1 \neq \Delta_2$. Then $\Q(x_3) = \Q(x_1 x_2) = \Q(x_1, x_2)$ by Lemma~\ref{lem:field}. But then $\Q(x_1) \subset \Q(x_3)$, and so $h_1 = [\Q(x_1) : \Q] \leq [\Q(x_3) : \Q]=h_3$. This though is a contradiction as $h_3 < h_1$ by assumption.

So we must have that $\Delta_1 = \Delta_2$. Note also that $\Q(x_1) = \Q(x_2) \supset \Q(x_3)$. Since $h_1 = 2 h_3$, one therefore has that $[\Q(x_1) : \Q(x_3)]=2$.

If $\Q(\sqrt{\Delta_1})=\Q(\sqrt{\Delta_3})$, then by Lemma~\ref{lem:subfieldsamefund} one has that either $x_3 \in \Q$ or $\Delta_1 \in \{9 \Delta_3 / 4, 4\Delta_3, 9 \Delta_3, 16\Delta_3\}$. The former cannot happen since $h_3 \geq 2$, so we must have that $\Delta_1 \in \{9 \Delta_3 / 4, 4\Delta_3, 9 \Delta_3, 16\Delta_3\}$. Note also that $h_1 \geq 4$ and so certainly $\lvert \Delta_1 \rvert \geq 23$.

Suppose first that $\Delta_1 = \Delta_2 = 9 \Delta_3 /4$ and write $\Delta=\Delta_3$. We may assume that $x_1$ is dominant, and so obtain the lower bound
\begin{align*}
	\lvert \alpha \rvert \geq& (0.9994 e^{3\pi \lvert \Delta \rvert^{1/2}/2}) (\min\{4.4 \times 10^{-5}, 3500 \times \Big (\frac{9}{4}\Big )^{-3} \lvert \Delta \rvert^{-3}\})\\
	&(\min\{4.4 \times 10^{-5}, 3500  \lvert \Delta \rvert^{-3}\}).
\end{align*}

Since $\Q(x_1) = \Q(x_2) \supset \Q(x_3)$, there are exactly $h_1$ conjugates $(x_1', x_2', x_3')$ of $(x_1, x_2, x_3)$. Each conjugate of $x_1, x_2$ occurs exactly once as the coordinate $x_1', x_2'$ respectively of a conjugate $(x_1', x_2', x_3')$. Further, each conjugate $x_3'$ of $x_3$ must appear at least once among the conjugates $(x_1', x_2', x_3')$.

Let $k, m_1, m_2, m_3 \in \Z$ be as given in the following table. When $h_3 \geq k$, we can, by Lemma~\ref{lem:dom} as usual, find conjugates $(x_1', x_2', x_3')$ of $(x_1, x_2, x_3)$, where each $x_i'$ is a singular modulus corresponding to a tuple $(a_i', b_i', c_i') \in T_{\Delta_i}$ with $a_i' \geq m_i$. 

\vspace{0.6cm}

\begin{center}
	\begin{tabular}{  c | c | c | c }\label{tab:conj}
		
		$m_1$ & $m_2$ & $m_3$ & $k$ \\ \hline
		$3$ & $3$ & $3$ & $10$\\
		$4$ & $4$ & $2$ & $12$\\
		$4$ & $4$ & $3$ & $14$\\
		$4$ & $4$ & $4$ & $16$
	\end{tabular}
\end{center}

\vspace{0.6cm}

Each such conjugate gives rise to an upper bound for $\lvert \alpha \rvert$ of the form
\begin{align*}
	\lvert \alpha \rvert &\leq (e^{\pi \lvert \Delta_1 \rvert^{1/2}/m_1}+2079)(e^{\pi \lvert \Delta_2 \rvert^{1/2}/m_2}+2079)(e^{\pi \lvert \Delta_3 \rvert^{1/2}/m_3}+2079)\\
	&= (e^{3\pi \lvert \Delta \rvert^{1/2}/2 m_1}+2079)(e^{3 \pi \lvert \Delta \rvert^{1/2}/2 m_2}+2079)(e^{ \pi \lvert \Delta \rvert^{1/2}/m_3}+2079).
\end{align*}
For $(m_1, m_2, m_3)$ as in the above table, these bounds are incompatible with the earlier lower bound for $\lvert \alpha \rvert$ when $\lvert \Delta \rvert$ is suitably large. Explicitly, we obtain that one of the following holds:
\begin{enumerate}
	\item $2 \leq h_3 \leq 9$;
	\item $10 \leq h_3 \leq 11$ and $\lvert \Delta \rvert \leq 5076$;
	\item $12 \leq h_3 \leq 13$ and $\lvert \Delta \rvert \leq 1430$;
	\item $14 \leq h_3 \leq 15$ and $\lvert \Delta \rvert \leq 255$;
		\item $16 \leq h_3$ and $\lvert \Delta \rvert \leq 164$.
\end{enumerate}

Suppose next that $\Delta_1 = \Delta_2 = 4 \Delta_3$ and write $\Delta=\Delta_3$. We may assume that $x_1$ is dominant, and so obtain the lower bound
\begin{align*}
	\lvert \alpha \rvert \geq& (0.9994 e^{2\pi \lvert \Delta \rvert^{1/2}}) (\min\{4.4 \times 10^{-5}, 3500 \times 4^{-3} \lvert \Delta \rvert^{-3}\})\\
	&(\min\{4.4 \times 10^{-5}, 3500  \lvert \Delta \rvert^{-3}\}).
	\end{align*}

As before, since $\Q(x_1) = \Q(x_2) \supset \Q(x_3)$, there are exactly $h_1$ conjugates $(x_1', x_2', x_3')$ of $(x_1, x_2, x_3)$. Each conjugate of $x_1, x_2$ occurs exactly once as the coordinate $x_1', x_2'$ respectively of a conjugate $(x_1', x_2', x_3')$. Further, each conjugate $x_3'$ of $x_3$ appears at least once among the conjugates $(x_1', x_2', x_3')$.

Let $k, m_1, m_2, m_3 \in \Z$ be as given in the following table. When $h_3 \geq k$, we can by Lemma~\ref{lem:dom} as usual, find conjugates $(x_1', x_2', x_3')$ of $(x_1, x_2, x_3)$, where each $x_i'$ is a singular modulus corresponding to a tuple $(a_i', b_i', c_i') \in T_{\Delta_i}$ with $a_i' \geq m_i$. 

\vspace{0.6cm}

\begin{center}
	\begin{tabular}{  c | c | c | c }\label{tab:conj}
		
		$m_1$ & $m_2$ & $m_3$ & $k$ \\ \hline
		$3$ & $3$ & $3$ & $10$\\
		$3$ & $3$ & $4$ & $12$\\
		$3$ & $3$ & $5$ & $14$\\
		$3$ & $4$ & $4$ & $16$
	\end{tabular}
\end{center}

\vspace{0.6cm}

Each such conjugate gives rise to an upper bound for $\lvert \alpha \rvert$ of the form
\begin{align*}
	\lvert \alpha \rvert &\leq (e^{\pi \lvert \Delta_1 \rvert^{1/2}/m_1}+2079)(e^{\pi \lvert \Delta_2 \rvert^{1/2}/m_2}+2079)(e^{\pi \lvert \Delta_3 \rvert^{1/2}/m_3}+2079)\\
	&= (e^{2\pi \lvert \Delta \rvert^{1/2}/m_1}+2079)(e^{2 \pi \lvert \Delta \rvert^{1/2}/m_2}+2079)(e^{ \pi \lvert \Delta \rvert^{1/2}/m_3}+2079).
\end{align*}
These bounds are incompatible with the above lower bound for $\lvert \alpha \rvert$ when $\lvert \Delta \rvert$ is large. Hence, we must have one of:
\begin{enumerate}
	\item $2 \leq h_3 \leq 9$;
	\item $10 \leq h_3 \leq 11$ and $\lvert \Delta \rvert \leq 650$;
	\item $12 \leq h_3 \leq 13$ and $\lvert \Delta \rvert \leq 317$;
	\item $14 \leq h_3 \leq 15$ and $\lvert \Delta \rvert \leq 236$;
	\item $16 \leq h_3$ and $\lvert \Delta \rvert \leq 129$.
\end{enumerate}

Now suppose that $\Delta_1 = \Delta_2 = 9 \Delta_3$ and write $\Delta = \Delta_3$. We may assume that $x_1$ is dominant, and so obtain the lower bound
\begin{align*}
	\lvert \alpha \rvert \geq& (0.9994 e^{3\pi \lvert \Delta \rvert^{1/2}}) (\min\{4.4 \times 10^{-5}, 3500 \times 9^{-3} \lvert \Delta \rvert^{-3}\})\\
	&(\min\{4.4 \times 10^{-5}, 3500  \lvert \Delta \rvert^{-3}\}).
\end{align*}

As before, there are exactly $h_1$ conjugates $(x_1', x_2', x_3')$ of $(x_1, x_2, x_3)$ and each conjugate of $x_1, x_2$ occurs exactly once as the coordinate $x_1', x_2'$ respectively of a conjugate $(x_1', x_2', x_3')$. Further, each conjugate $x_3'$ of $x_3$ appears at least once among the conjugates $(x_1', x_2', x_3')$.

As previously, when $h_3 \geq k$, we can find conjugates $(x_1', x_2', x_3')$ of $(x_1, x_2, x_3)$, where each $x_i'$ is a singular modulus corresponding to a tuple $(a_i', b_i', c_i') \in T_{\Delta_i}$ with $a_i' \geq m_i$, and $k, m_1, m_2, m_3 \in \Z$ are as given in the following table. 

\vspace{0.6cm}

\begin{center}
	\begin{tabular}{  c | c | c | c }\label{tab:conj}
		
		$m_1$ & $m_2$ & $m_3$ & $k$ \\ \hline
		$3$ & $3$ & $2$ & $8$\\
		$3$ & $4$ & $2$ & $10$
	\end{tabular}
\end{center}

\vspace{0.6cm}

Each such conjugate gives rise to an upper bound for $\lvert \alpha \rvert$ of the form
\begin{align*}
	\lvert \alpha \rvert &\leq (e^{3\pi \lvert \Delta \rvert^{1/2}/m_1}+2079)(e^{3 \pi \lvert \Delta \rvert^{1/2}/m_2}+2079)(e^{ \pi \lvert \Delta \rvert^{1/2}/m_3}+2079).
\end{align*}
For $(m_1, m_2, m_3)$ as in the above table, these bounds are incompatible with the lower bound for $\lvert \alpha \rvert$ when $\lvert \Delta \rvert$ is large. One thus has that:
\begin{enumerate}
	\item $2 \leq h_3 \leq 7$;
	\item $8 \leq h_3 \leq 9$ and $\lvert \Delta \rvert \leq 255$;
	\item $10 \leq h_3$ and $\lvert \Delta \rvert \leq 85$.
\end{enumerate}

Finally suppose that $\Delta_1 = \Delta_2 = 16 \Delta_3$ and write $\Delta = \Delta_3$. We may assume that $x_1$ is dominant, and so obtain the lower bound
\begin{align*}
	\lvert \alpha \rvert \geq& (0.9994 e^{4\pi \lvert \Delta \rvert^{1/2}}) (\min\{4.4 \times 10^{-5}, 3500 \times 16^{-3} \lvert \Delta \rvert^{-3}\})\\
	&(\min\{4.4 \times 10^{-5}, 3500  \lvert \Delta \rvert^{-3}\}).
\end{align*}

As before, there are exactly $h_1$ conjugates $(x_1', x_2', x_3')$ of $(x_1, x_2, x_3)$ and each conjugate of $x_1, x_2$ occurs exactly once as the coordinate $x_1', x_2'$ respectively of a conjugate $(x_1', x_2', x_3')$. Further, each conjugate $x_3'$ of $x_3$ appears at least once among the conjugates $(x_1', x_2', x_3')$.

When $h_3 \geq k$, we can then by Lemma~\ref{lem:dom} as usual, find conjugates $(x_1', x_2', x_3')$ of $(x_1, x_2, x_3)$, where each $x_i'$ is a singular modulus corresponding to a tuple $(a_i', b_i', c_i') \in T_{\Delta_i}$ with $a_i' \geq m_i$, and $k, m_1, m_2, m_3 \in \Z$ are as given in the following table. 

\vspace{0.6cm}

\begin{center}
	\begin{tabular}{  c | c | c | c }\label{tab:conj}
		
		$m_1$ & $m_2$ & $m_3$ & $k$ \\ \hline
		$3$ & $3$ & $2$ & $8$\\
		$3$ & $3$ & $3$ & $10$
	\end{tabular}
\end{center}

\vspace{0.6cm}

Each such conjugate gives rise to an upper bound for $\lvert \alpha \rvert$ of the form
\begin{align*}
	\lvert \alpha \rvert &\leq (e^{4\pi \lvert \Delta \rvert^{1/2}/m_1}+2079)(e^{4 \pi \lvert \Delta \rvert^{1/2}/m_2}+2079)(e^{ \pi \lvert \Delta \rvert^{1/2}/m_3}+2079).
\end{align*}
For $(m_1, m_2, m_3)$ as in the above table, these bounds are incompatible with the earlier lower bound for $\lvert \alpha \rvert$ when $\lvert \Delta \rvert$ is suitably large. Explicitly, we obtain that one of the following holds:
\begin{enumerate}
	\item $2 \leq h_3 \leq 7$;
	\item $8 \leq h_3 \leq 9$ and $\lvert \Delta \rvert \leq 79$;
	\item $10 \leq h_3$ and $\lvert \Delta \rvert \leq 52$.
\end{enumerate}

Now consider the case when $\Q(\sqrt{\Delta_1}) \neq \Q(\sqrt{\Delta_3})$. Then by Lemma~\ref{lem:subfielddiffund} one of the following holds:
\begin{enumerate}
	\item at least one of $\Delta_1$ or $\Delta_3$ is listed in \cite[Table~2.1]{AllombertBiluMadariaga15};
	\item  $h_1 \geq 128$.
\end{enumerate}
If $\Delta_i$ is listed in \cite[Table~2.1]{AllombertBiluMadariaga15}, then we can find all possibilities for $(\Delta_1, \Delta_2, \Delta_3)$ satisfying the condition $[\Q(x_1) : \Q(x_3)] =2$.

So suppose $h_1 \geq 128$. Write $\Delta = \max\{\lvert \Delta_1 \rvert, \lvert \Delta_2 \rvert, \lvert \Delta_3 \rvert \}$. Taking conjugates as necessary, we may assume that $x_i$ is dominant, where $\Delta = \lvert \Delta_i \rvert$. Since $h_i \geq 64$, certainly $\lvert \Delta_i \rvert \geq 23$. Then by the bounds in Subsection~\ref{subsec:bounds}
\[\lvert \alpha \rvert \geq (0.9994 e^{\pi \Delta^{1/2}}) (\min\{4.4 \times 10^{-5}, 3500  \Delta^{-3}\})^2.\]

Since $h_1 \geq 128$, we can always find a conjugate $(x_1', x_2', x_3')$ of $(x_1, x_2, x_3)$ with the associated $a_1', a_2', a_3'$ satisfying $a_1', a_2' \geq 4$ and $a_3' \geq 5$. This gives rise to the upper bound
\[ \lvert \alpha \rvert \leq (e^{ \pi \Delta^{1/2}/4}+2079) (e^{\pi \Delta^{1/2}/4}+2079) (e^{\pi \Delta^{1/2}/5}+2079).\]
Together these bounds imply that $\Delta \leq 488$. Hence $h_1 \geq 128$ and $\lvert \Delta_1 \rvert, \lvert \Delta_3 \rvert \leq 488$.

\subsection{The case $h_1 = 2h_2 = 2 h_3$}\label{subsec:onebig}
Since $h_1 \neq h_2, h_3$, one has that $\Delta_1 \neq \Delta_2, \Delta_3$. Therefore, $\Q(x_3) = \Q(x_1x_2) = \Q(x_1, x_2)$. The last equality holds by Lemma~\ref{lem:field} since $\Delta_1 \neq \Delta_2$. Thus $\Q(x_1) \subset \Q(x_3)$ and so $h_1 = [\Q(x_1) : \Q] \leq [\Q(x_3) : \Q]=h_3$. This though is a contradiction as $h_3 < h_1$ by assumption, and so we may eliminate this case.

\section{Eliminating non-trivial cases}\label{sec:elim}

Recall that we assumed $x_1, x_2, x_3$ are singular moduli with $x_1 x_2 x_3 = \alpha \in \Q^{\times}$. We write $\Delta_i$ for their respective discriminants and $h_i$ for the corresponding class numbers $h(\Delta_i)$. Without loss of generality $h_1 \geq h_2 \geq h_3$. Assuming that we are not in one of the trivial cases (1)--(3) of Theorem~\ref{thm:main}, then we have shown in Section~\ref{sec:pf} that $x_1, x_2, x_3$ are pairwise distinct and that we must be in one of the following cases.

\begin{myEnumerate}
	\item $h_1=h_2=h_3$. \\
	Write $h = h_1 = h_2 = h_3$.      
	\begin{myEnumerate}
		\item $\Delta_1 = \Delta_2 = \Delta_3$. \\
		 Write $\Delta = \Delta_1 = \Delta_2 = \Delta_3$. 
		 \begin{myEnumerate}
		 	\item $4 \leq h \leq 11$.
		 	\item $12 \leq h \leq 13$ and $\lvert \Delta \rvert \leq 30339$.
		 	\item $14 \leq h \leq 15$ and $\lvert \Delta \rvert \leq 4124$.
		 	\item $16 \leq h \leq 17$ and $\lvert \Delta \rvert \leq 1045$.
		 	\item $18 \leq h \leq 19$ and $\lvert \Delta \rvert \leq 488$.
		 	\item $20 \leq h$ and $\lvert \Delta \rvert \leq 334$.
		 \end{myEnumerate}
	 
		\item $\lvert \Delta_1 \rvert > \lvert \Delta_2 \rvert$. \\
		In this case, $\Q(x_1)= \Q(x_2)= \Q(x_3)$.
		\begin{myEnumerate}
			\item $\Q(\sqrt{\Delta_i}) \neq \Q(\sqrt{\Delta_j})$ for some $i, j$.\\
			 The list of all possible $\Delta_1, \Delta_2, \Delta_3$ is given in \cite[Table~4.1]{AllombertBiluMadariaga15}.
			\item $\Q(\sqrt{\Delta_1})=\Q(\sqrt{\Delta_2})=\Q(\sqrt{\Delta_3})$.\\
			In this case, $\Delta_1 = 4 \Delta_2$, $\Delta_3 \in \{\Delta_1, \Delta_2\}$, and $\Delta_2 \equiv 1 \bmod 8$.
			\begin{myEnumerate}
				\item $\Delta_3 = \Delta_2$.\\
				Write $\Delta = \Delta_2 = \Delta_3$.
				\begin{myEnumerate}
					\item $2 \leq h \leq 3$.
					\item $4 \leq h \leq 5$ and $\lvert \Delta \rvert \leq 367$.
					\item $6 \leq h \leq 7$ and $\lvert \Delta \rvert \leq 163$.
					\item $8 \leq h$ and $\lvert \Delta \rvert \leq 93$.
					\end{myEnumerate}
				\item $\Delta_3 = \Delta_1$.\\
				Write $\Delta = \Delta_2$.
				\begin{myEnumerate}
					\item $2 \leq h \leq 3$.
					\item $4 \leq h \leq 5$ and $\lvert \Delta \rvert \leq 5781$.
					\item $6 \leq h \leq 7$ and $\lvert \Delta \rvert \leq 650$.
					\item $8 \leq h \leq 9$ and $\lvert \Delta \rvert \leq 192$.
					\item $10 \leq h$ and $\lvert \Delta \rvert \leq 92$.
					\end{myEnumerate}
				\end{myEnumerate}
			\end{myEnumerate}
		\end{myEnumerate}
	
\item $h_1=h_2=2h_3$.\\
In this case, $\Q(x_1)=\Q(x_2)\supset \Q(x_3)$ and $[\Q(x_1) : \Q(x_3)] =2$. 
\begin{myEnumerate}
	\item $\Delta_1 \neq \Delta_2$.\\
	This case cannot arise.
	\item $\Delta_1 = \Delta_2$.
	\begin{myEnumerate}
		\item $\Q(\sqrt{\Delta_1})=\Q(\sqrt{\Delta_3})$.
		\begin{myEnumerate}
			\item $\Delta_1 = \Delta_2 = 9 \Delta_3 /4$.\\
				Write $\Delta = \Delta_3$.
			\begin{myEnumerate}
				\item $2 \leq h_3 \leq 9$.
				\item $10 \leq h_3 \leq 11$ and $\lvert \Delta \rvert \leq 5076$.
				\item $12 \leq h_3 \leq 13$ and $\lvert \Delta \rvert \leq 1430$.
				\item $14 \leq h_3 \leq 15$ and $\lvert \Delta \rvert \leq 255$.
				\item $16 \leq h_3$ and $\lvert \Delta \rvert \leq 164$.
			\end{myEnumerate}
			\item $\Delta_1 = \Delta_2 = 4 \Delta_3$.\\
			Write $\Delta = \Delta_3$.
			\begin{myEnumerate}
				\item $2 \leq h_3 \leq 9$;
			\item $10 \leq h_3 \leq 11$ and $\lvert \Delta \rvert \leq 650$.
				\item $12 \leq h_3 \leq 13$ and $\lvert \Delta \rvert \leq 317$.
				\item $14 \leq h_3 \leq 15$ and $\lvert \Delta \rvert \leq 236$.
				\item $16 \leq h_3$ and $\lvert \Delta \rvert \leq 129$.
			\end{myEnumerate}
		\item $\Delta_1 = \Delta_2 = 9 \Delta_3$.\\
			Write $\Delta = \Delta_3$.
		\begin{myEnumerate}
			\item $2 \leq h_3 \leq 7$.
			\item $8 \leq h_3 \leq 9$ and $\lvert \Delta \rvert \leq 255$.
			\item $10 \leq h_3$ and $\lvert \Delta \rvert \leq 85$.
		\end{myEnumerate}
			\item $\Delta_1 = 16 \Delta_3$.\\
			Write $\Delta = \Delta_3$.
			\begin{myEnumerate}
				\item $2 \leq h_3 \leq 7$.
				\item $8 \leq h_3 \leq 9$ and $\lvert \Delta \rvert \leq 79$.
				\item $10 \leq h_3$ and $\lvert \Delta \rvert \leq 52$.
			\end{myEnumerate}
		\end{myEnumerate}
	\item $\Q(\sqrt{\Delta_1}) \neq \Q(\sqrt{\Delta_3})$.
	\begin{myEnumerate}
	\item	$\Delta_1$ is listed in \cite[Table~2.1]{AllombertBiluMadariaga15}.
	\item	$\Delta_3$ is listed in \cite[Table~2.1]{AllombertBiluMadariaga15}.
	\item $h_1 \geq 128$ and $\lvert \Delta_1 \rvert, \lvert \Delta_3 \rvert \leq 488$.
	\end{myEnumerate}
		\end{myEnumerate}
\end{myEnumerate}
\item $h_1 = 2 h_2 = 2h_3$.\\
This case cannot arise.
\end{myEnumerate}

The finite list of discriminants $(\Delta_1, \Delta_2, \Delta_3)$ satisfying one of these conditions may be computed in PARI. In fact, there are $2888$ triples $(\Delta_1, \Delta_2, \Delta_3)$ satisfying one of the above conditions. We now show how each such choice of $(\Delta_1, \Delta_2, \Delta_3)$ may be eliminated by another computation in PARI.

For each tuple $(\Delta_1, \Delta_2, \Delta_3)$ satisfying one of the above conditions, we show that $x_1 x_2 x_3 \notin \Q$, for any choice of $x_1, x_2, x_3$ pairwise distinct singular moduli of respective discriminant $\Delta_i$. (In fact, by taking conjugates as necessary, it is enough to eliminate all possible choices of $x_2, x_3$ for some fixed $x_1$.) To do this, we use the following algorithm.

For each possible choice of $(x_1, x_2, x_3)$, let $L$ be a number field containing all conjugates of $x_1, x_2, x_3$. If $x_1 x_2 x_3 \in \Q^{\times}$, then
\[\frac{x_1 x_2 x_3}{\sigma(x_1) \sigma(x_2) \sigma(x_3)} =1\]
for every automorphism $\sigma \in \gal(L/\Q)$. It therefore suffices to find an automorphism of $L$ without this property in order to eliminate the tuple $(x_1, x_2, x_3)$. Once all such tuples $(x_1, x_2, x_3)$ have been eliminated, we can eliminate the tuple $(\Delta_1, \Delta_2, \Delta_3)$. 

It remains to find a suitable field $L$. In 1(a), the $x_i$ are all conjugate, so $L= \Q(\sqrt{\Delta_1}, x_1)$ suffices. In 1(b)(i), the field $\Q(x_1)$ is Galois, and so we may take $L= \Q(x_1)$. In 1(b)(ii), $L= \Q(\sqrt{\Delta_1}, x_1)$ works. In 2(b)(i), let $L = \Q(\sqrt{\Delta_1}, x_1)$. In 2(b)(ii)(A), the field $\Q(x_1)$ is Galois and so we may take $L = \Q(x_1)$. In 2(b)(ii)(B), the field $\Q(x_3)$ is Galois and so $L = \Q(\sqrt{\Delta_1}, x_1)$ suffices. There are no $(\Delta_1, \Delta_2, \Delta_3)$ satisfying the conditions in 2(b)(ii)(C), so this case may be excluded.

We implement the resulting algorithm using a PARI script. Running it, we are able to eliminate each of the above possibilities. The proof of Theorem~\ref{thm:main} is thus complete. Total running time of our program is about 12 hours on a standard laptop computer.\footnote{With a 2.5GHz Intel i5 processor and 8GB RAM.} The time taken to find and eliminate a triple of discriminants $(\Delta_1, \Delta_2, \Delta_3)$ satisfying one of the above conditions increases with the respective class numbers $h_1, h_2, h_3$. Consequently, approximately 75\% of the overall run time of the program is spent dealing with case (2)(b)(ii)(B), which includes discriminants $\Delta_1$ of class number 32, greater than in any other case. 

\bibliographystyle{amsplain}
\bibliography{refs}

\end{document}